\newtheorem{assumption}{Assumption}
\DeclarePairedDelimiterX\Set[2]{\lbrace}{\rbrace}%
 { #1 \,\delimsize| \,\mathopen{} #2 }
\newcommand{\bs}[1]{\boldsymbol{#1}}
\newcommand{\E}{\mathbb{E}}
\newcommand{\mK}{\mathcal{K}}
\newcommand{\mL}{\mathcal{L}}
\newcommand{\mY}{\mathcal{Y}}
\newcommand{\mP}{\mathcal{P}}
\newcommand{\mZ}{\mathcal{Z}}
\newcommand{\mB}{\mathcal{B}}
\newcommand{\mI}{\mathcal{I}}
\newcommand{\conv}{\textrm{conv}}
\newcommand{\braket}[2]{\langle #1, #2 \rangle}
\newcommand{\whb}[1]{\widehat{\bs{#1}}}
\newcommand{\hbeta}{\hat{\bs{\beta}}}
\newcommand{\bvoij}{\bs{v}^{\omega ij}}
\newcommand{\bvij}[1]{\bs{v}^{\omega ij #1}}
\newcommand{\Uoij}{U^{\omega ij}}
\newcommand{\Uij}[1]{U^{\omega ij #1}}
\newcommand{\soij}{s^{\omega ij}}
\begin{document}

\title{A Decomposition Method for Distributionally-Robust Two-stage Stochastic Mixed-integer Cone Programs}

\author{Fengqiao Luo  \and  Sanjay Mehrotra }

\institute{Fengqiao Luo \at
              Department of Industrial Engineering and Management Science, Northwestern University \\
              Evanston, Illinois\\
              fengqiaoluo2014@u.northwestern.edu  
           \and
              Sanjay Mehrotra,  Corresponding author  \at
              Department of Industrial Engineering and Management Science, Northwestern University \\
              Evanston, Illinois\\
              mehrotra@northwestern.edu
}

\date{Received: date / Accepted: date}
%The correct dates will be entered by the editor.

\maketitle

\begin{abstract}
We develop a decomposition algorithm for distributionally-robust two-stage stochastic mixed-integer convex cone programs, and its important special case of distributionally-robust two-stage stochastic mixed-integer second order cone programs.  This generalizes the algorithm proposed by Sen and Sherali~[Mathematical Programming 106(2): 203-223, 2006]. We show that the proposed algorithm is finitely convergent if  the second-stage problems are solved to optimality at incumbent first stage solutions, and solution to an optimization problem to identify worst-case probability distribution is available. The second stage problems can be solved using a branch-and-cut algorithm. The decomposition algorithm is illustrated with an example. Computational results on a stochastic programming  generalization of a facility location problem show significant solution time improvements from the proposed approach. Solutions for many models that are intractable for an extensive form formulation become possible. Computational results suggest that solution time requirement does not increase significantly when considering distributional robust counterparts to the stochastic programming models.
\end{abstract}
\keywords{distributionally robust optimization \and two-stage stochastic mixed-integer second-order-cone programming 
	\and two-stage stochastic mixed-integer conic programming \and disjunctive programming }
%\subclass{49J53 \and  49K99 \and more}

\section{Introduction}
\label{sec:introd}
We consider the following distributionally robust two-stage stochastic mixed-integer cone program:
\begin{equation}\label{opt:DR-TSS-MICP}
\begin{aligned}
&\underset{y}{\textrm{min}}\;c^{\top}y+\underset{P\in\mP}{\textrm{max}}\;\E_P[Q(y,\omega)]  \\
&\textrm{ s.t. } F y\ge a,             \\
&\qquad y\in\mK_1\cap \{0,1\}^n,
\end{aligned}
\tag{DR-TSS-MICP}
\end{equation}
where $y$ are the first-stage decision variables that are pure-binary, and $\mK_1\subseteq\mathbb{R}^n$ is a convex cone in the Euclidian space. Here $\mP$ is the ambiguity set of probability distributions.  The ambiguity set $\mathcal{P}$ is allowed to be any convex set defined using a finite support $\Omega$ of scenarios. The recourse function $Q(y,\omega)$ is given by the following second-stage mixed-integer cone programs:
\begin{equation}\label{opt:micp-subprob}
\begin{aligned}
Q(y,\omega)=\;&\underset{x^{\omega}}{\textrm{min}}\; q^{\omega\top}x^{\omega} \\
&\textrm{ s.t. } W^{\omega}x^{\omega} \ge r^{\omega}-T^{\omega}y^k, \\
&\qquad x^{\omega}_{i}\in[z^{L\omega}_i,\;z^{U\omega}_i] & \forall i\in[l_1+l_2], \\
&\qquad x^{\omega}\in \mK_2,\;x^{\omega}\in\mathbb{Z}^{l_1}\times\mathbb{R}^{l_2}, 
\end{aligned}
\end{equation}
where $\mK_2\subseteq\mathbb{R}^{l_1+l_2}$ is a convex cone in Euclidian space. The constants $z^{L\omega}_i$ and $z^{U\omega}_i$
are lower and upper bounds on $x^{\omega}_i$ at scenario $\omega$. We let $\mathcal{B} := \{0,1\}^n$. An important special case of \eqref{opt:micp-subprob} is the distributionally-robust two-stage stochastic mixed-integer second-order-cone program:
\begin{equation}\label{opt:DR-TSS-MISOCP}
\begin{aligned}
&\underset{y}{\textrm{min}}\;\;c^{\top}y + \underset{P\in\mathcal{P}}{\textrm{max}}\;\E_P[Q(y,\omega)] \\
&\textrm{ s.t. }\;\; y\in \mY\cap\mathcal{B}.  
\end{aligned}
\tag{DR-TSS-MISOCP}
\end{equation}
For \eqref{opt:DR-TSS-MISOCP} the set $\mY$ is defined by linear and second-order-cone constraints as follows:
\begin{equation}
\mY=\Set*{y\in\mathbb{R}^n}{
\begin{array}{l}
Fy\ge a,\; \|f^{\top}_iy+g_i\|_2\le h^{\top}_iy+e_i\;\;\forall i\in[m_1],\\
0\le y_j\le 1\;\;\forall j\in[n]
\end{array} }.
\end{equation}
The recourse function $Q(y,\omega)$ in case of \eqref{opt:DR-TSS-MISOCP} is defined by the following mixed-integer second-order-cone program (MISOCP):
\begin{equation}\label{opt:subprob}
\begin{aligned}
Q(y,\omega)=\;&\underset{x^{\omega}}{\textrm{min}}\;\; q^{\omega\top}x^{\omega} & \\
&\textrm{ s.t. }\; \|A^{\omega}_ix^{\omega}+B^{\omega}_iy+b^{\omega}_i\|_2\le g^{\omega\top}_ix^{\omega}+d^{\omega}_i & \forall i\in[m], \\
&\qquad\;  W^{\omega}x^{\omega} \ge r^{\omega}-T^{\omega}y,\;\; x^{\omega}\in \mathbb{Z}^{l_1}\times\mathbb{R}^{l_2}, &  \\
&\qquad\; x^{\omega}_{i}\in[z^{L\omega}_i,\;z^{U\omega}_i] & \forall i\in[l_1+l_2], 
\end{aligned}
\end{equation} 
where $x^{\omega}$ are the second-stage decision variables for scenario $\omega$ with $l_1$ integral and $l_2$ continuous variables, 
$T^{\omega}$ is the technology matrix, and $W^{\omega}$ is the recourse matrix corresponding to scenario $\omega$. 
The algorithm developed in this paper was motivated from a distributionally-robust generalization of a service center location problem with decision-dependent customer utilities studied in our recent work \cite{luo2019_DRO-FL}. This model will be considered in our computational study. However, the method developed in this paper may have wider applicability for problems that admit \eqref{opt:DR-TSS-MISOCP} or \eqref{opt:DR-TSS-MICP} formulations. The \eqref{opt:DR-TSS-MISOCP} and \eqref{opt:DR-TSS-MICP} models have the following preliminary reformulation:
\begin{equation}\label{opt:DR-TSS-MISOCP-reform}
\begin{aligned}
&\textrm{min}\;\;c^{\top}y+\underset{P \in\mathcal{P}}{\textrm{max}}\sum_{\omega\in\Omega}p_{\omega}\eta^\omega & \\
&\textrm{ s.t. }\;\; \eta^\omega\ge Q(x,\omega) &\quad \forall \omega\in\Omega, \\
&\qquad\;\; y\in \mY,\;\eta^\omega\in\mathbb{R} &\quad \forall \omega\in\Omega.
\end{aligned}
\end{equation}

\subsection{Literature Review}
\label{sec:liter-rev}
Numerical methods have been developed for solving two-stage stochastic mixed-integer linear (TS-SMIP) and two-stage stochastic second-order-conic programs (TS-SMISOCP)  over the years. For  the two-stage stochastic linear programs (TS-SLP) with continuous second-stage variables, the recourse objective function is a piecewise-linear convex function of the first-stage variables. Benders' decomposition technique has been used to generate a piecewise-linear approximation of the convex recourse function  for this problem \cite{birge1997-introd-stoch-prog,shapiro2003-stoch-prog}.  The technique decomposes the problem as a master problem (a linear program) and multiple scenario problems (linear programs) at each iteration. Optimality and feasibility cuts derived based on the solution of the scenario problems are added to the master problem \cite{bertsimas1997_introd-linear-prog}. 

For TS-SMIP with mixed-integer second-stage variables, the objective function of the recourse problem is not convex  \cite{blair1982_val-func-int-prog,schultz1993_cont-obj-stoch-mip}. Consequently, solving TS-SMIP becomes much more challenging. Some early methods were proposed in \cite{caroe1997_cut-plane-smip,caroe1997_L-shaped-smip,louveaux1993_L-shaped-methd-smip} for solving TS-SMIP with mixed-integer second-stage variables. For example, Car{\o}e and Tind~\cite{caroe1997_cut-plane-smip,caroe1997_L-shaped-smip} suggest a conceptual algorithm based on integer programming duality. The decomposition concept is similar to the one used in  Benders' decomposition algorithm, but the algorithm uses mixed integer price functions. The master problem in this method results in nonlinear constraints, and becomes computationally challenging. Laporte and Louveaux~\cite{louveaux1993_L-shaped-methd-smip} suggested the use of a branch-and-cut algorithm that adds valid cuts to approximate the non-convex recourse function, but this algorithm is not guaranteed to find an optimal solution. In the recent years, more systematic methods are proposed for solving two-stage stochastic mixed-integer programs with mixed-integer second-stage variables. These methods are based on two main ideas. The first idea uses parametric Gomory cuts that sequentially convexify the feasible set 
\cite{sen2014_decomp-alg-gomory-cuts-ts-smip,kucukyavuz2014_finite-convergent-decomp-alg-ts-smip}. 
Gade et al. \cite{sen2014_decomp-alg-gomory-cuts-ts-smip} show finite-convergence of their algorithm for solving TS-SMIPs with pure-binary first-stage variables and pure-integer second-stage variables based on generating Gomory cuts 
that are parameterized by the first-stage solution. 
This approach is generalized by Zhang and K{\"u}{\c c}{\"u}kyavuz~\cite{kucukyavuz2014_finite-convergent-decomp-alg-ts-smip}
for solving TS-SMIP with pure-integer variables in both stages.  Recent work has also provided insights into developing tighter formulations by identifying globally valid parametric inequalities (see \cite{bansal2018_tight-ts-smip}, and references therein). Specifically, Bansal and Zhang~\cite{bansal2018_dr-ts-mixed-int-p-conic-prog} have developed 
nonlinear sparse cuts for tightening the second-stage formulation of a class of two-stage stochastic $p$-order conic mixed-integer programs by extending the results of \cite{atamturk2008_conic-mip-cuts} on convexifying a simple polyhedral conic mixed integer set.

The second idea is based on  solving a mixed-integer linear program in the second stage using a branch-and-cut technique \cite{sen2006_decomp-BB-TSS-MIP}. This idea has merit because in practice algorithms that are only based on cutting planes may not be efficient in finding an optimal solution of a mixed binary or mixed integer program. The algorithm developed by Sen and Sherali~\cite{sen2006_decomp-BB-TSS-MIP} allows for using the branch-and-cut method when solving the second stage scenario problems. Sen and Sherali~\cite{sen2006_decomp-BB-TSS-MIP} showed finite convergence of their algorithm for linear problems with pure-binary first-stage variables and mixed-binary second-stage variables. The decomposition branch-and-cut algorithm of \cite{sen2006_decomp-BB-TSS-MIP} uses 
the branch-and-cut method in solving the second-stage problems either completely or partially for given first stage solutions. A union of the sets represented by the leaf nodes generated during the branch-and-cut process is taken to generate a valid optimality constraint at the current first-stage solution. A disjunctive programming formulation is used for the generation of this optimality constraint \cite{balas1985,balas1998,sen2005_set-convf-ts-smip,sen1985_convg-cut-plane-nonconv-math-prog}.  The decomposition branch-and-cut algorithm is enhanced in \cite{sen2017_ancestral-bender-cut-ts-smip}
to solve TS-SMIP with mixed-integer variables in both stages. A recent tutorial on two-stage stochastic mixed-integer programming can be found in K{\"u}{\c c}{\"u}kyavuz and Sen~\cite{kucukyavuz2017-introd-ts-stoch-mip}.

The Benders' decomposition algorithms developed for two-stage stochastic mixed integer programs with binary first stage variables can be modified to solve distributionally-robust two-stage stochastic mixed integer programs \cite{bansal2018-decomp-alg-distr-rob-mip}, and distributionally robust two-stage stochastic disjunctive programs \cite{bansal2019_ts-dr-disj-prog}. The algorithms retain their finite-convergence property if  the ambiguity set is polyhedral representable \cite{bansal2018-decomp-alg-distr-rob-mip}, or more generally when a separation oracle for the unknown probability distribution is available \cite{bansal2019_ts-dr-disj-prog}. 

The work in this paper is most closely related to \cite{sen2006_decomp-BB-TSS-MIP}, while it benefits from the observations in \cite{bansal2018-decomp-alg-distr-rob-mip,bansal2019_ts-dr-disj-prog}.  This paper makes the following contributions. It shows that the algorithmic framework of Sen and Sherali~\cite{sen2006_decomp-BB-TSS-MIP} can be generalized to solve two-stage stochastic mixed integer second order cone and convex conic programs, and their distributionally robust counterparts \eqref{opt:DR-TSS-MISOCP}, \eqref{opt:DR-TSS-MICP} to optimality in a finite number of iterations. The paper also discusses the computational performance of the proposed algorithm in the context of a facility location problem, which is formulated as a distributionally robust two-stage mixed integer second order cone program.  The results show that the solution times of the decomposition algorithm are significantly better than those required to solve an extended formulation of the problem.

The paper is organized as follows. In Section~\ref{sec:alg} our algorithm for solving \eqref{opt:DR-TSS-MISOCP} is developed. The generalization of this algorithm for \eqref{opt:DR-TSS-MICP} is given in Section~\ref{sec:DR-TSS-MICP}. An example illustrating the algorithm is given in Section~\ref{sec:smallExample}.
Section~\ref{sec:num-exp} discusses computational results on instances of a facility location model. This is followed by some concluding remarks in Section~\ref{sec:conclusions}.

\section{An Algorithm for Solving Distributionally Robust Two-stage Stochastic Mixed-integer Cone Programs}
\label{sec:alg}

\subsection{Structure of the algorithm}
\label{sec:struct}
The algorithm presented in this section consists of solving a pure-binary SOCP first-stage problem 
and mixed-integer SOCP second-stage problems corresponding to each scenario at each iteration.
The central idea is to generate an aggregated optimality constraint that is valid
for the first-stage problem. The first-stage problem is formulated as:
\begin{equation}\label{opt:masterprob}
\begin{aligned}
&\underset{x,\eta}{\textrm{min}}\;\; c^{\top}y + \eta &\\
&\textrm{ s.t. }\; \eta \ge h^l - (f^l)^{\top}y  & \forall  l\in \{1,\ldots,k-1\}, \\
&\qquad\; y\in \mY\cap\mB,\;\eta\in\mathbb{R},
\end{aligned}
\end{equation}
where  $k-1$ is the number of iterations that have been completed. 
The constraint $\eta \ge h^l- (f^l)^{\top}y$ (defined later) is an aggregated optimality constraint which is obtained from a risk-averse aggregation 
of scenario based optimality constraints at iteration $l\in[k-1]$, where $h^l$ and $f^l$ are appropriate coefficients. The approach to generate this constraint is as follows:
\begin{enumerate}
	\item Solve the current first-stage problem.
	\item Given the current first-stage solution, solve second-stage problems using a branch-and-cut algorithm.  
	\item For each scenario and each leaf node of the branch-and-cut tree for the scenario, 
		  generate an optimality constraint for a scenario-node (Section~\ref{sec:sc-nd-cut}). 
	\item For each scenario, generate a valid optimality constraint
		  by taking the union of the epigraph defined by the scenario-node  
		  optimality constraints and using a lift-and-project technique (Section~\ref{sec:sc-cut}).		  
	\item Generate an aggregated optimality constraint by aggregating over the worst-case probability distribution of the scenarios,
		  and add it to the first-stage problem (Section~\ref{sec:agg-cut}). 
	\item Repeat Step 1.
\end{enumerate}

\subsection{Scenario-node based optimality constraint}
\label{sec:sc-nd-cut}
In the $k^{\textrm{th}}$ (main) iteration, 
we solve the first-stage problem to optimality and obtain the current optimal solution $y^k$
of the first-stage problem \eqref{opt:masterprob}. Then we substitute the first-stage solution $y^k$ into the second-stage problem 
\eqref{opt:subprob} for each $\omega\in\Omega$. The second-stage problem is solved partially (not necessarily to optimality) using a branch-and-cut method. 
We use $\textrm{Sub}(y^k,\omega)$ to denote the second-stage problem \eqref{opt:subprob}
for the first-stage solution $y^k$ and scenario $\omega$. After partially solving $\textrm{Sub}(y^k,\omega)$
using the branch-and-cut method,
we obtain a branch-and-cut tree corresponding to this sub-problem. Let $\mathcal{L}(y^k,\omega)$ be the set of leaf nodes of the tree. 
Consider a node $v\in \mathcal{L}(y^k,\omega)$ and let the SOCP relaxation associated with the node be given by:
\begin{equation}\label{opt:SOCP-node}
\begin{aligned}
&\underset{x}{\textrm{min}}\;\; q^{\omega\top}x^{\omega} & & \quad \textrm{dual multipliers}\\
&{\rm s.t. }\; \|A^{\omega}_ix^{\omega}+B^{\omega}_iy^k+b^{\omega}_i\|_2\le g^{\omega\top}_ix^{\omega}+d^{\omega}_i & \forall i\in[m], & \qquad [\lambda^{k\omega v}_i,\;\theta^{k\omega v}_i]\in\textrm{SOC} \\
&\qquad\; W^{\omega}x^{\omega} \ge r^{\omega}-T^{\omega}y^k, & & \qquad \gamma^{k\omega v}_1 \\
&\qquad\; X^{\omega}_vx^{\omega}\ge t^{\omega}_v, & & \qquad  \gamma^{k\omega v}_2  \\
&\qquad\; x^{\omega}\ge {z}^{L\omega}_{v}, \;\; x^{\omega}\le z^{U\omega}_{v}, & & \qquad \tau^{k\omega v}_L, \;\; \tau^{k\omega v}_U
\end{aligned}
\end{equation}  
where $z^{L\omega}_v$ and $z^{U\omega}_v$ are the lower and upper bound vectors of $x^{\omega}$ associated with 
node $v$. For the continuous entries of $x^{\omega}$, the corresponding entries of $z^{L\omega}_v$ and $z^{U\omega}_v$
are given by $z^{L\omega}_{v,i}:=z^{L\omega}_i$ and $z^{U\omega}_{v,i}:=z^{U\omega}_i$ for all $i\in[l_1]$.
If $z^{L\omega}_v$ and $z^{U\omega}_v$ corresponding to the integral entries of $x$ in the node $v$ are equal, the corresponding variables are fixed.
The constraints $X^{\omega}_vx^{\omega}\ge t^{\omega}_v$ are additional structure-based cuts used to tighten the second-stage reformulation
that is associated with node $v$. Note that if no such cuts are generated for 
tightening the second-stage reformulation, the constraints $X^{\omega}_vx^{\omega}\ge t^{\omega}_v$ are not present in \eqref{opt:SOCP-node}.
Let $[\lambda^{k\omega v}_i,\;\theta^{k\omega v}_i]$ for $i\in[m]$, $\gamma^{k\omega v}_1$, $\gamma^{k\omega v}_2$, $\tau^{k\omega v}_L$ and $\tau^{k\omega v}_U$  be 
the dual multipliers associated with the SOCP constraint, the recourse constraint, the additional constraints,
and the lower and upper bounding constraints, respectively. 
We make the following assumptions when considering \eqref{opt:DR-TSS-MISOCP}.
\begin{assumption}\label{ass:feasible}
\begin{itemize}
	\item[\emph{(a)}] The  \eqref{opt:DR-TSS-MISOCP} problem has a complete recourse.
	\item[\emph{(b)}] For any $x^1\in\mathbb{Z}^{l_1}$ satisfying $x^1_i\in[z^{L\omega}_i,\;z^{U\omega}_i]$, 
			   there exists $x^2\in\mathbb{R}^{l_2}$
			   such that the solution $x=[x^1;x^2]$ is strictly feasible to the second-stage problem \eqref{opt:subprob}.
\end{itemize}
\end{assumption}
Assumption~\ref{ass:feasible}(a) ensures that every first-stage feasible solution leads to a feasible second-stage problem
for all scenarios. Assumption~\ref{ass:feasible}(b) ensures that all the feasible sets associated with relaxation of the terminal nodes  
form a partition of the feasible set of the second-stage problem at each scenario, and strong duality holds for every node relaxation SOCP (see Appendix~\ref{app:strong-dual}). We note that if Assumption~\ref{ass:feasible} is violated, we can introduce artificial variables in the instance to make the assumption satisfied. 
The conic dual of \eqref{opt:SOCP-node} can be formulated as
\begin{equation}\label{opt:SOCP-node-dual}
\begin{aligned}
&\textrm{max}\;\;\left[\sum^m_{i=1}(\theta^{k\omega v}_i )^{\top}B^{\omega}_i - (\gamma^{k\omega v}_1)^{\top}T^{\omega}\right] y^k
+\sum^m_{i=1}\left(b^{\omega\top}_i\theta^{k\omega v}_i-d^{\omega}_i\lambda^{k\omega v}_i\right) \\
&\qquad +r^{\omega\top}\gamma^{k\omega v}_1+t^{\top}_v\gamma^{k\omega v}_2 
+z^{L\omega\top}_v\tau^{k\omega v}_L-z^{U\omega\top}_v\tau^{k\omega v}_U    \\
&\textrm{ s.t. }\;\; \sum^m_{i=1}(g^{\omega}_i\lambda^{k\omega v}_i-A^{\omega\top}_i\theta^{k\omega v}_i)+W^{\omega\top}\gamma^{k\omega v}_1+X^{\omega\top}_v\gamma^{k\omega v}_1
+\tau^{k\omega v}_L-\tau^{k\omega v}_U=q^{\omega},  \\
&\qquad\;\; \|\theta^{k\omega v}_i\|_2\le \lambda^{k\omega v}_i \qquad\quad i\in[m],   \\
&\qquad\;\; \gamma^{k\omega v}_1\ge 0,\;\gamma^{k\omega v}_2\ge 0,\;\tau^{k\omega v}_L\ge 0, \; \tau^{k\omega v}_U\ge 0,
\; \lambda^{k\omega v}_i\ge 0\;\forall i\in[m].
\end{aligned}
\end{equation}

\begin{proposition}\label{prop:dual-ineq}
Let Assumption~\ref{ass:feasible} be satisfied.
Let $y^*$ be the optimal solution of \eqref{opt:DR-TSS-MISOCP}, and $x^{\omega *}$ be an optimal second-stage
solution of $\textrm{Sub}(y^*,\omega)$. 
Let $[\lambda^{k\omega v}_i,\;\theta^{k\omega v}_i]$ for $i\in[m]$, $\gamma^{k\omega v}_1$, 
$\gamma^{k\omega v}_2$, $\tau^{k\omega v}_L$ and $\tau^{k\omega v}_U$
be the dual variable values determined by solving \eqref{opt:SOCP-node} corresponding to node $v$ of $\textrm{Sub}(y^k,\omega)$.
If $x^{\omega*}$ satisfies $z^{L\omega}_v\le x^{\omega*}\le z^{U\omega}_v$, then
the following inequality holds\emph{:} 
\begin{equation}\label{eqn:valid-ineq-w-v}
\begin{aligned}
Q(y^*,\omega)\ge &\left[\sum^m_{i=1}(\theta^{k\omega v}_i )^{\top}B^{\omega}_i - (\gamma^{k\omega v}_1)^{\top}T^{\omega}\right] y^*+\sum^m_{i=1}\left( b^{\omega\top}_i\theta^{k\omega v}_i-d^{\omega}_i\lambda^{k\omega v}_i\right) \\
&+r^{\omega\top}\gamma^{k\omega v}_1+t^{\top}_v\gamma^{k\omega v}_2 +z^{L\omega\top}_v\tau^{k\omega v}_L-z^{U\omega\top}_v\tau^{k\omega v}_U.
\end{aligned}
\end{equation}
\end{proposition}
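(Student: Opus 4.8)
The plan is to read off the right-hand side of \eqref{eqn:valid-ineq-w-v} as the objective value of the conic dual \eqref{opt:SOCP-node-dual} at a particular dual-feasible point, but for the node relaxation obtained from \eqref{opt:SOCP-node} after replacing the first-stage vector $y^k$ by $y^*$, and then to bound that value from above by $Q(y^*,\omega)$ through weak conic duality together with primal feasibility of $x^{\omega *}$.

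The first step is the observation that the first-stage vector enters \eqref{opt:SOCP-node} only through the right-hand sides of the conic and recourse constraints, hence enters \eqref{opt:SOCP-node-dual} only through the objective and never through the dual constraints. Therefore the multipliers $[\lambda^{k\omega v}_i,\theta^{k\omega v}_i]_{i\in[m]}$, $\gamma^{k\omega v}_1$, $\gamma^{k\omega v}_2$, $\tau^{k\omega v}_L$, $\tau^{k\omega v}_U$ obtained by solving the node problem at $y^k$ --- which are dual feasible, and indeed dual optimal at $y^k$ since Assumption~\ref{ass:feasible}(b) furnishes a Slater point and hence strong duality and dual attainment for every node relaxation (Appendix~\ref{app:strong-dual}) --- are still feasible for the dual of the node relaxation in which $y^k$ is replaced by $y^*$. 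Substituting $y^*$ for $y^k$ in the objective of \eqref{opt:SOCP-node-dual} and evaluating at these multipliers produces exactly the right-hand side of \eqref{eqn:valid-ineq-w-v}.

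The second step is to check that $x^{\omega *}$ is feasible for \eqref{opt:SOCP-node} with $y^*$ in place of $y^k$. Because $x^{\omega *}$ solves $\textrm{Sub}(y^*,\omega)$ it satisfies the second-order-cone constraints evaluated at $y=y^*$ and the recourse inequality $W^{\omega}x^{\omega}\ge r^{\omega}-T^{\omega}y^*$; by hypothesis it lies in the box $z^{L\omega}_v\le x^{\omega *}\le z^{U\omega}_v$; and it satisfies the tightening cuts $X^{\omega}_v x^{\omega}\ge t^{\omega}_v$, since these are valid for the second-stage feasible set intersected with the box of node $v$, a set containing $x^{\omega *}$. Dropping integrality only enlarges the feasible region of the node relaxation, so $x^{\omega *}$ is feasible for the SOCP \eqref{opt:SOCP-node} at $y^*$ with objective value $q^{\omega\top}x^{\omega *}=Q(y^*,\omega)$. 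Weak conic duality then gives that $Q(y^*,\omega)$ is at least the optimal value of that SOCP, which is at least the value of its dual objective at the multipliers from the first step, and this last quantity is the right-hand side of \eqref{eqn:valid-ineq-w-v}; chaining the inequalities proves the proposition.

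I expect the one point requiring care to be the claim that $x^{\omega *}$ honours the node cuts $X^{\omega}_v x^{\omega}\ge t^{\omega}_v$: one must know these are valid for the second-stage problem restricted to the box of node $v$ rather than only for the specific relaxation met at $y^k$, which is precisely the property guaranteed by the disjunctive/lift-and-project procedure that generates them. Once that is granted, the rest --- matching the dual objective in \eqref{opt:SOCP-node-dual} term by term with the right-hand side of \eqref{eqn:valid-ineq-w-v}, and invoking weak duality --- is routine.
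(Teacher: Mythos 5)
Your proposal is correct and follows essentially the same route as the paper: the paper's proof is exactly your weak-duality chain written out term by term, using the $y$-independence of the dual constraints, primal feasibility of $x^{\omega*}$ (recourse at $y^*$, the node box, and the cuts $X^{\omega}_v x\ge t^{\omega}_v$, whose validity at $x^{\omega*}$ the paper also assumes implicitly), the nonnegativity of the multipliers, and the second-order-cone inner-product inequality in place of an abstract appeal to weak duality. The only cosmetic difference is your attribution of the node cuts to the lift-and-project step (they are structure-based second-stage cuts, e.g.\ the tangent inequalities of Section~5.2, which are independent of the first-stage decision), but the requirement you identify is precisely the one needed.
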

\begin{proof}
Note that the expression on the right side of \eqref{eqn:valid-ineq-w-v} is the objective in 
\eqref{opt:SOCP-node-dual} by replacing $y^k$ with $y^*$. 
We denote this expression as $\psi(y^*)$ in the proof. The SOCP duality  implies that
\begin{displaymath}
\begin{aligned}
&Q(y^*,\omega)=q^{\omega\top}x^{\omega*} \\
&= \left[\sum^m_{i=1}(g_i\lambda^{k\omega v}_i-A^{\omega\top}_i\theta^{k\omega v}_i)+W^{\omega\top}\gamma^{k\omega v}_1+X^{\omega\top}_v\gamma^{k\omega v}_2
+\tau^{k\omega v}_L-\tau^{k\omega v}_U \right]^{\top}x^{\omega*}  \\
&=\sum^m_{i=1}\left[\lambda^{k\omega v}_ig^{\omega\top}_ix^{\omega *}-(\theta^{k\omega v}_i)^{\top}A^{\omega}_ix^{\omega *}\right] 
+ (\gamma^{k\omega v}_1)^{\top}W^{\omega}x^{\omega *}+(\gamma^{k\omega v}_2)^{\top}X^{\omega}_vx^{\omega *} \\
&\qquad +(\tau^{k\omega v}_L)^{\top}x^{\omega *}+(-\tau^{k\omega v}_U)^{\top}x^{\omega *} \\
&\ge \sum^m_{i=1}\left[\lambda^{k\omega v}_ig^{\omega\top}_ix^{\omega *}-(\theta^{k\omega v}_i)^{\top}A^{\omega}_ix^{\omega *}\right] + (\gamma^{k\omega v}_1)^{\top}[r^{\omega}-T^{\omega}y^k] \\
&\qquad+(\gamma^{k\omega v}_2)^{\top}t^{\omega}_v+(\tau^{k\omega v}_L)^{\top}z^{L\omega}_v -(\tau^{k\omega v}_U)^{\top}z^{U\omega}_v  \\
\end{aligned}
\end{displaymath}
\begin{displaymath}
\begin{aligned}
&=\psi(y^*) + \sum^m_{i=1}\left[\lambda^{k\omega v}_ig^{\omega\top}_ix^{\omega *}-(\theta^{k\omega v}_i)^{\top}A^{\omega}_ix^{\omega *}\right] \\
&\qquad-\sum^m_{i=1}[(b^{\omega}_i+B^{\omega}_iy^*)^{\top}\theta^{k\omega v}_i-d^{\omega}_i\lambda^{k\omega v}_i] \\
&=\psi(y^*)+\sum^m_{i=1}\left[\lambda^{k\omega v}_i(g^{\omega\top}_ix^{\omega *}+d^{\omega}_i) - (\theta^{k\omega v}_i)^{\top}(A^{\omega}_ix^{\omega *}+B^{\omega}_iy^*+b^{\omega}_i)\right] \\
&\ge \psi(y^*),
\end{aligned}
\end{displaymath}
where we have applied the inequalities given by constraints in \eqref{opt:SOCP-node} and \eqref{opt:SOCP-node-dual},
the non-negativity of dual variables and the inner product in the second-order cone to derive the above inequality.
\end{proof}
Note that inequality \eqref{eqn:valid-ineq-w-v} is a scenario-node based optimality constraint.
The validity of \eqref{eqn:valid-ineq-w-v} depends on whether the optimal second-stage
solution $x^{\omega*}$ corresponding to the optimal solution $y^*$ of \eqref{opt:DR-TSS-MISOCP}
satisfies $z^{L\omega}_v\le x^{\omega*}\le z^{U\omega}_v$, where $z^{L\omega}_v$ and $z^{U\omega}_v$
are the lower and upper bounds  that specify node $v$ of $\textrm{Sub}(y^k,\omega)$.

\subsection{Scenario based optimality constraint}
\label{sec:sc-cut}
It is clear that the second-stage optimal solution $x^{\omega *}$ is in the set:
\begin{displaymath}
\mZ^{\omega}=\Set*{x\in\mathbb{R}^{l_1+l_2}}{ z^{L\omega}_{i}\le x_i\le z^{U\omega}_{i},\; \forall i\in[l_1+l_2]}.
\end{displaymath}
The hyper-rectangle specified by node $v$ of $\textrm{Sub}(y^k,\omega)$ is 
\begin{displaymath}
\mZ^{\omega}_v=\Set*{x\in\mathbb{R}^{l_1+l_2}}{ z^{L\omega}_{v,i}\le x_i\le z^{U\omega}_{v,i},\; \forall i\in[l_1+l_2]}.
\end{displaymath}
Assumption~\ref{ass:feasible} implies that $\mZ^{\omega}=\cup_{v\in\mathcal{L}(y^k,\omega)}\mZ^{\omega}_v$
, i.e., the set of leaf nodes form a partition of $\mZ^{\omega}$. 
Therefore, the second-stage optimal solutions $x^{\omega *}$ are feasible at some node 
in $\mathcal{L}(y^k,\omega)$.
Based on this property, we can generate a valid scenario based optimality cut that is independent of the node 
of $\textrm{Sub}(y^k,\omega)$ that contains $x^{\omega *}$ using the disjunctive programming technique 
\cite{balas1998,balas1985}. 
To generate this disjunctive programming based optimality cut we consider the following epigraphs:
\begin{equation}\label{eqn:Ekvw}
E^{k\omega}_v=\Set*{(\eta^{\omega},y)\in\mathbb{R}\times\mathbb{R}^n}
{\eta^{\omega}\ge R^{k\omega v\top} y+S^{k\omega v}, \; y\in \mY^{\prime}},
\end{equation}
where $R^{k\omega v}$ and $S^{k\omega v}$ are the coefficient vector of $y^*$ and the constant in
the right-side expression of \eqref{eqn:valid-ineq-w-v}, respectively. Specifically, they are given as follows:
\begin{displaymath}
\begin{aligned}
&R^{k\omega v}=\sum^m_{i=1}(\theta^{k\omega v}_i )^{\top}B^{\omega}_i - (\gamma^{k\omega v}_1)^{\top}T^{\omega}, \\
&S^{k\omega v}=\sum^m_{i=1}\left( b^{\omega\top}_i\theta^{k\omega v}_i-d^{\omega}_i\lambda^{k\omega v}_i\right)
+r^{\omega\top}\gamma^{k\omega v}_1+t^{\top}_v\gamma^{k\omega v}_2 
+z^{L\omega\top}_v\tau^{k\omega v}_L-z^{U\omega\top}_v\tau^{k\omega v}_U.
\end{aligned}
\end{displaymath} 
The set $\mY^{\prime}$ is a polytope defined
as $\mY^{\prime}=\Set*{y\in\mathbb{R}^n}{Fy\ge a,\;0\le y_j\le 1\;\;\forall j\in[n]}$. We are interested in the union polytope 
$\Pi^{k\omega}$ defined as $\Pi^{k\omega}=\cup_{v\in\mathcal{L}(y^k,\omega)}E^{k\omega}_v$.
Proposition~\ref{prop:epigraph} gives a property of $\Pi^{k\omega}$.

\begin{proposition}\label{prop:epigraph}
Let $y$ be any feasible solution of \eqref{opt:DR-TSS-MISOCP}.
Suppose that the point $(\eta^{\omega},y)\in\mathbb{R}\times\mY$ satisfies $\eta^{\omega}\ge Q(y,\omega)$. Then $(\eta^{\omega},y)\in\Pi^{k\omega}$.
\end{proposition}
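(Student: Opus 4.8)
The plan is to exploit the fact that the leaf-node hyper-rectangles $\mZ^{\omega}_v$, $v\in\mathcal{L}(y^k,\omega)$, partition $\mZ^{\omega}$, so that any second-stage optimal solution $x^{\omega}(y)$ for the given feasible $y$ lands in (at least) one such rectangle. First I would fix $y$ feasible for \eqref{opt:DR-TSS-MISOCP} and fix $\eta^{\omega}$ with $\eta^{\omega}\ge Q(y,\omega)$. Let $x^{\omega}$ be an optimal solution of $\textrm{Sub}(y,\omega)$ (which exists and is feasible by Assumption~\ref{ass:feasible}(a)); then $x^{\omega}\in\mZ^{\omega}$. Since $\mZ^{\omega}=\cup_{v\in\mathcal{L}(y^k,\omega)}\mZ^{\omega}_v$, there is a node $v^{\star}\in\mathcal{L}(y^k,\omega)$ with $z^{L\omega}_{v^{\star}}\le x^{\omega}\le z^{U\omega}_{v^{\star}}$.

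The next step is to invoke Proposition~\ref{prop:dual-ineq}, but I must be slightly careful: that proposition is stated for $y^*$, the optimal solution of \eqref{opt:DR-TSS-MISOCP}. Reading its proof, however, the only properties of $y^*$ actually used are that $y^*$ is feasible for \eqref{opt:DR-TSS-MISOCP} and that the associated optimal $x^{\omega*}$ lies in $\mZ^{\omega}_{v^{\star}}$; feasibility of $y$ gives feasibility of $\textrm{Sub}(y,\omega)$ via complete recourse, and the dual-feasibility of the multipliers $[\lambda^{k\omega v^{\star}}_i,\theta^{k\omega v^{\star}}_i],\gamma^{k\omega v^{\star}}_1,\gamma^{k\omega v^{\star}}_2,\tau^{k\omega v^{\star}}_L,\tau^{k\omega v^{\star}}_U$ together with the conic and nonnegativity constraints is what drives the chain of inequalities. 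So the same argument yields
\[
Q(y,\omega)\ge R^{k\omega v^{\star}\top}y+S^{k\omega v^{\star}},
\]
i.e.\ $(\eta^{\omega},y)\in E^{k\omega}_{v^{\star}}$ since also $y\in\mY\subseteq\mY^{\prime}$ and $\eta^{\omega}\ge Q(y,\omega)\ge R^{k\omega v^{\star}\top}y+S^{k\omega v^{\star}}$.

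Finally, $E^{k\omega}_{v^{\star}}\subseteq\Pi^{k\omega}=\cup_{v\in\mathcal{L}(y^k,\omega)}E^{k\omega}_v$, so $(\eta^{\omega},y)\in\Pi^{k\omega}$, which is the claim. The main obstacle — really the only substantive point — is the observation just made that Proposition~\ref{prop:dual-ineq} applies verbatim with the arbitrary feasible $y$ in place of $y^*$; I would either restate that proposition for a generic feasible point or explicitly point out that its proof never uses optimality of $y^*$. A secondary technical point to check is that Assumption~\ref{ass:feasible}(b) is what guarantees both the partition property of the leaf rectangles and strong duality for the node relaxation SOCP \eqref{opt:SOCP-node} (needed so the dual values exist and the equality $Q(y,\omega)=q^{\omega\top}x^{\omega}$ can be replaced by the dual objective inside Proposition~\ref{prop:dual-ineq}); these are already asserted in the text, so no new work is required.
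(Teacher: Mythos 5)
Your proposal is correct and follows essentially the same route as the paper's own proof: locate the leaf node whose rectangle contains the second-stage optimal solution for $y$, apply Proposition~\ref{prop:dual-ineq} to get $Q(y,\omega)\ge R^{k\omega v^{\star}\top}y+S^{k\omega v^{\star}}$, and conclude membership in $E^{k\omega}_{v^{\star}}\subseteq\Pi^{k\omega}$. Your explicit remark that Proposition~\ref{prop:dual-ineq} never uses optimality of $y^*$ (only feasibility) is a point the paper uses implicitly without comment, so it is a welcome clarification rather than a deviation.
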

\begin{proof}
Let $x^{\omega}$ be the optimal second-stage solution of $\textrm{Sub}(y,\omega)$.
There exists a node $v\in\mathcal{L}(y,\omega)$ such that $x^{\omega}\in\mZ^{\omega}_{v}$.
Then Proposition~\ref{prop:dual-ineq} implies that
$\eta^{\omega}\ge Q(y,\omega)\ge R^{k\omega v} y+S^{k\omega v}$,
and hence $(\eta^{\omega},y)\in E^{k\omega}_{v}$.
Therefore, we have $(\eta^{\omega},y)\in\Pi^{k\omega}$.
\end{proof}

Based on Proposition~\ref{prop:epigraph}, we now construct a valid inequality 
using the lift-and-project technique \cite{balas1998,conforti-IP-2014}.
Using Theorem~2.1 of \cite{balas1998}, the convex hull of $\Pi^{k\omega}$ can be represented as:
\begin{displaymath}
\textrm{conv}(\Pi^{k\omega})=\textrm{Proj}_{y,\eta^{\omega}}\Set*{\begin{array}{l}
y,y_{v},\eta^{\omega}, \\
\eta^{\omega}_{v},\alpha_v,\\
\forall v\in 
\mathcal{L}(y^k,\omega)
\end{array}}
{\begin{aligned}
&\eta^{\omega}=\sum_{v\in\mathcal{L}(y^k,\omega)}\eta^{\omega}_v, \\
& y = \sum_{v\in\mathcal{L}(y^k,\omega)}y_v, \\
&\eta^{\omega}_v\ge R^{k\omega v}y_v+S^{k\omega v}\alpha_v, \\
& Fy_v\ge a\alpha_v,\;\; \bs{0}\le y_v\le \alpha_v\bs{1}, \\
&  \sum_{v\in\mathcal{L}(y^k,\omega)}\alpha_v = 1,\; \alpha_v \ge 0 \\
& \qquad\forall v\in\mathcal{L}(y^k,\omega)
\end{aligned}}
.
\end{displaymath} 
Based on the above representation and using Theorem~3.1 of \cite{balas1998}, 
a valid inequality of $\textrm{conv}(\Pi^{k\omega})$ can be written as:
\begin{equation}\label{eqn:cut-omega}
\eta^{\omega}\ge \lambda^{k\omega\top} y + \zeta^{k\omega},
\end{equation}
if and only if $(\lambda^{\omega},\zeta^{\omega})\in\textrm{Proj}_{\lambda,\zeta} V^{k\omega}$,
where $V^{k\omega}$ is a polyhedron defined as follows:
\begin{equation}\label{eqn:Vk}
V^{k\omega}=\Set*{\begin{aligned}&\lambda,\zeta,\sigma_{v},\gamma_v\\ 
&\forall v\in\mathcal{L}(x^k,\omega)\end{aligned}}
{
\begin{aligned}
&\lambda-R^{k\omega v}+F^{\top}\sigma_v-\gamma_v \le 0 \quad \forall v\in\mathcal{L}(x^k,\omega), \\
&\zeta-\sigma^{\top}_{v}a-S^{k\omega v}+\gamma^{\top}_v\bs{1} \le 0 \quad \forall v\in\mathcal{L}(x^k,\omega), \\
&\sigma_{v}\ge 0,\;\gamma_v\ge 0
\end{aligned}
}.
\end{equation}
The inequality \eqref{eqn:cut-omega} becomes a scenario based optimality constraint if we choose the coefficients
$(\lambda^{k\omega},\zeta^{k\omega})$ in \eqref{eqn:cut-omega} to be the optimal solution of the linear program LP($y^k,\omega$):
\begin{equation}\label{opt:disj-cut}
\begin{aligned}
&\textrm{max}\;\;\lambda^{\top}y^k+\zeta \\
&\textrm{ s.t. }\; \{\lambda,\zeta,\sigma_{v},\gamma_v\;\forall v\in\mathcal{L}(y^k,\omega)\}\in V^{k\omega}.
\end{aligned}
\end{equation}
Note that the scenario based inequality \eqref{eqn:cut-omega} does not depend on node indices. 
Proposition~\ref{prop:LP-bd} shows that the optimal value of linear program \eqref{opt:disj-cut} is bounded.
\begin{proposition}\label{prop:LP-bd}
The linear program \eqref{opt:disj-cut} is feasible. It has a finite optimal value, and  an extreme-point optimal solution.
\end{proposition}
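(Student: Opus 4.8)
The plan is to establish the three assertions in turn: feasibility of $V^{k\omega}$, boundedness of the objective of \eqref{opt:disj-cut}, and attainment of the optimum at a vertex. For feasibility I would simply exhibit a point. Since $\mathcal{L}(y^k,\omega)$ is a finite and nonempty collection of leaf nodes, take $\sigma_v=\bs{0}$ and $\gamma_v=\bs{0}$ for every $v\in\mathcal{L}(y^k,\omega)$, let $\lambda$ be the component-wise minimum of the finitely many vectors $R^{k\omega v}$, and let $\zeta=\min_{v\in\mathcal{L}(y^k,\omega)}S^{k\omega v}$. With $\sigma_v$ and $\gamma_v$ zero, the first family of inequalities in \eqref{eqn:Vk} collapses to $\lambda\le R^{k\omega v}$ and the second to $\zeta\le S^{k\omega v}$, both of which hold by the choice of $\lambda$ and $\zeta$, while the sign constraints on $\sigma_v,\gamma_v$ are trivial. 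Hence $V^{k\omega}\neq\emptyset$ and \eqref{opt:disj-cut} is feasible.

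For the finite optimal value, the idea is to bound $\lambda^{\top}y^k+\zeta$ from above using only the inequalities defining $V^{k\omega}$ together with the feasibility of the incumbent, namely $Fy^k\ge a$ and $\bs{0}\le y^k\le\bs{1}$, which hold because $y^k$ solves \eqref{opt:masterprob} and therefore lies in $\mY\cap\mB\subseteq\mY^{\prime}$. I would fix an arbitrary leaf node $v$ and an arbitrary feasible tuple $(\lambda,\zeta,\{\sigma_v,\gamma_v\})\in V^{k\omega}$, take the inner product of the first inequality for this $v$ with the nonnegative vector $y^k$ to get $\lambda^{\top}y^k\le R^{k\omega v\top}y^k-\sigma_v^{\top}(Fy^k)+\gamma_v^{\top}y^k$, and then use $\sigma_v\ge\bs{0}$ with $Fy^k\ge a$ and $\gamma_v\ge\bs{0}$ with $y^k\le\bs{1}$ to obtain $\lambda^{\top}y^k\le R^{k\omega v\top}y^k-\sigma_v^{\top}a+\bs{1}^{\top}\gamma_v$. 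Adding the second defining inequality, $\zeta\le\sigma_v^{\top}a+S^{k\omega v}-\bs{1}^{\top}\gamma_v$, cancels the $\sigma_v^{\top}a$ and $\bs{1}^{\top}\gamma_v$ terms and leaves $\lambda^{\top}y^k+\zeta\le R^{k\omega v\top}y^k+S^{k\omega v}$, which is a fixed constant. Thus the objective of \eqref{opt:disj-cut} is bounded above on $V^{k\omega}$, and with feasibility this gives a finite optimal value. Intuitively this just records that the aggregated cut evaluated at $y^k$ cannot exceed any single scenario-node inequality \eqref{eqn:valid-ineq-w-v} evaluated at $y^k$, which is exactly what one expects since $y^k\in\mY^{\prime}$.

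For the existence of an extreme-point optimal solution, the plan is to show that $V^{k\omega}$ contains no line, so that, being a nonempty polyhedron over which a linear objective is bounded from above, it attains its maximum at a vertex (a standard fact of linear programming; see e.g.\ \cite{bertsimas1997_introd-linear-prog}). To verify line-freeness, suppose $d=(d_\lambda,d_\zeta,\{d_{\sigma_v},d_{\gamma_v}\})$ lies in the lineality space, i.e.\ both $d$ and $-d$ are recession directions of $V^{k\omega}$. The homogenized sign constraints give $d_{\sigma_v}\ge\bs{0}$ and $d_{\gamma_v}\ge\bs{0}$, and applying the same to $-d$ forces $d_{\sigma_v}=\bs{0}$ and $d_{\gamma_v}=\bs{0}$ for all $v$; the homogenized first and second families of inequalities then reduce to $d_\lambda\le\bs{0}$ and $d_\zeta\le 0$, and applying these to $-d$ as well yields $d_\lambda=\bs{0}$, $d_\zeta=0$. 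Hence $d=\bs{0}$ and $V^{k\omega}$ is pointed, which completes the argument.

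The only genuinely non-routine step is the boundedness argument: one has to recognize that the right quantity to estimate is $\lambda^{\top}y^k+\zeta$ at the incumbent, and that pairing the first family of constraints with $y^k$ and invoking the two sign relations $Fy^k\ge a$ and $y^k\le\bs{1}$ makes all $\sigma_v$- and $\gamma_v$-dependent terms cancel against the second family; the feasibility and pointedness parts are essentially bookkeeping.
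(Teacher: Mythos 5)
Your proof is correct, and for the first two claims it is essentially the paper's own argument: the same feasible point ($\sigma_v=\bs{0}$, $\gamma_v=\bs{0}$, $\lambda$ the componentwise minimum of the $R^{k\omega v}$, $\zeta=\min_v S^{k\omega v}$), and the same bound $\lambda^{\top}y^k+\zeta\le R^{k\omega v\top}y^k+S^{k\omega v}$ obtained by pairing the first family of constraints with the nonnegative vector $y^k$ and using $Fy^k\ge a$, $y^k\le\bs{1}$, $\sigma_v\ge\bs{0}$, $\gamma_v\ge\bs{0}$. The only divergence is in the vertex-attainment step: the paper writes an optimal solution as a convex combination of extreme points plus a conic combination of extreme rays (Minkowski--Weyl), drops the rays because the objective is bounded, and picks the best extreme point, whereas you verify directly that the lineality space of $V^{k\omega}$ is trivial and then invoke the standard fact that a linear objective bounded over a nonempty pointed polyhedron attains its optimum at a vertex. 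The two routes are equivalent in substance; yours is marginally more self-contained, since the paper's decomposition argument presupposes that the polyhedron has extreme points, which it covers only by asserting (without detail) that the exhibited feasible point is a vertex, while your lineality-space computation establishes pointedness explicitly.
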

\begin{proof}
It is easy to verify that the solution $\sigma_v=0$, $\gamma_v=\bs{0}$, \newline
$\lambda=\textrm{min}_{v\in\mL(y^k,\omega)}\;R^{k\omega v}$, 
$\zeta=\textrm{min}_{v\in\mL(y^k,\omega)}\;S^{k\omega v}$ 
is an extreme point of the polyhedron defined by \eqref{opt:disj-cut}.
Therefore, \eqref{opt:disj-cut} is feasible.
Choose an arbitrary leaf node $v\in\mathcal{L}(y^k,\omega)$. 
Based on the constraints in \eqref{eqn:Vk}, we have
$\lambda\le R^{k\omega v}-F^{\top}\sigma_v+\gamma_v$ and 
$\zeta\le S^{k\omega v}+\sigma^{\top}_va-\gamma^{\top}_v\bs{1}$.
The objective of \eqref{opt:disj-cut} satisfies:
\begin{displaymath}
\begin{aligned}
&\lambda^{\top}y^k+\xi\le (R^{k\omega v\top}-\sigma^{\top}_vF+\gamma^{\top}_v)y^k
+S^{k\omega v}+\sigma^{\top}_va-\gamma^{\top}_v\bs{1} \\
&=R^{k\omega v\top}y^k+S^{k\omega v}+\sigma^{\top}_v(a-Fy^k)+\gamma^\top_v(y_k-\bs{1})
\le R^{k\omega v\top}y^k+S^{k\omega v},
\end{aligned}
\end{displaymath}
where the last inequality is due to $\sigma_v\ge 0$, $Fy^k\ge a$, $\gamma_v\ge \bs{0}$,
and $y_k\le\bs{1}$. 
Therefore, the optimal value of \eqref{eqn:Vk} is finite.

We prove by contradiction to show that \eqref{opt:disj-cut} has an extreme-point optimal solution.
Suppose $\bs{v}^*$ is an optimal solution of  \eqref{opt:disj-cut}.  
Let $\{\bs{v}^s\}_{s\in S}$ and $\{\bs{r}^t\}_{t\in T}$ be the set of 
extreme points and extreme rays of the polyhedron defined by the feasible set
of \eqref{opt:disj-cut}. 
Then from the polyhedra decomposition theorem $\bs{v}^*$ can be represented as
\begin{displaymath}
\bs{v}^*=\sum_{s\in S}\beta_s\bs{v}^s + \sum_{t\in T}c_t\bs{r}^t,
\end{displaymath}
for some coefficients $\beta_s\ge 0$, $\sum_{s\in S}\beta_s=1$,
and $c_t\ge 0$. Note that the objective value can not increase
along any extreme ray (otherwise, the problem is unbounded).
We can take $c_t=0$. Let $\bs{v}_{s_0}$ be the extreme point
that has the largest objective value among all $s\in S$.
Then $\bs{v}_{s_0}$ is an extreme point optimal solution of \eqref{opt:disj-cut}.
\end{proof}
As a consequence of Proposition~\ref{prop:LP-bd} a valid inequality \eqref{eqn:cut-omega} is available from an optimal 
vertex solution of \eqref{opt:disj-cut}.
%Note that $V^{k\omega}$ is not a polyhedral cone because we have imposed 
%the normalization by setting the coefficient of $\eta^{\omega}$ to be one. 
%Furthermore, any extreme point of $V^{k\omega}$ corresponds to a facet inequality of $\textrm{conv}(\Pi^{k\omega})$ (Theorem~5.1 of \cite{balas1998}).

\subsection{Aggregated optimality constraint}
\label{sec:agg-cut}
We determine the unknown probability distribution $P$ as a worst-case (risk-averse) distribution $\bs{p}^k:=\Set*{p^k_{\omega}}{\omega\in\Omega}$
based on the current first-stage solution $y^k$. Specifically, let $\bs{p}^k$ be an optimal solution of the optimization problem:
\begin{equation}\label{opt:worst-distr}
\underset{\bs{p}\in\mathcal{P}}{\textrm{max}}\; \sum_{\omega\in\Omega}p_{\omega}\left(\lambda^{k\omega\top}y^k+\zeta^{k\omega}\right),
\end{equation}
where $y^k$ is the current first-stage solution, and $\lambda^{k\omega}$ and $\zeta^{k\omega}$ are determined by the linear program 
\eqref{opt:disj-cut}.
We aggregate inequality \eqref{eqn:cut-omega} over all scenarios using $\bs{p}^k$. The aggregated optimality cut is given by:
\begin{equation}\label{eqn:agg-cut}
\eta \ge \sum_{\omega\in\Omega} p^k_{\omega}\lambda^{k\omega\top}y + \sum_{\omega\in\Omega}p^k_{\omega}\zeta^{k\omega}.
\end{equation}
The constraint \eqref{eqn:agg-cut} is added to \eqref{opt:subprob} at the end of iteration $k$. 
\begin{proposition}\label{prop:agg-constr-valid}
The aggregated constraint \eqref{eqn:agg-cut} is valid for \eqref{opt:DR-TSS-MISOCP-reform}.
\end{proposition}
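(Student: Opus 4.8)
The plan is to reduce the aggregated inequality \eqref{eqn:agg-cut} to the per-scenario inequalities \eqref{eqn:cut-omega}, whose validity has already been established, and then to combine them using that the weights $p^k_\omega$ are nonnegative and that $\bs{p}^k\in\mathcal{P}$. Concretely, since the epigraph variable $\eta$ in \eqref{opt:masterprob} stands for the worst-case inner expectation of \eqref{opt:DR-TSS-MISOCP-reform}, it suffices to show that for every first-stage-feasible point $y$ (i.e. every $y$ feasible for \eqref{opt:DR-TSS-MISOCP}) one has $\sum_{\omega\in\Omega}p^k_\omega(\lambda^{k\omega\top}y+\zeta^{k\omega})\le\max_{P\in\mathcal{P}}\sum_{\omega\in\Omega}p_\omega Q(y,\omega)$; this certifies that the affine function on the left is a valid lower bound for the worst-case recourse term and hence that \eqref{eqn:agg-cut} cuts off no first-stage-feasible point.

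First I would establish the scenario-wise bound $\lambda^{k\omega\top}y+\zeta^{k\omega}\le Q(y,\omega)$ for every $\omega\in\Omega$. By Proposition~\ref{prop:LP-bd} the linear program \eqref{opt:disj-cut} attains its optimum, so $(\lambda^{k\omega},\zeta^{k\omega})$ is well defined and is a feasible point of $V^{k\omega}$; hence $(\lambda^{k\omega},\zeta^{k\omega})\in\textrm{Proj}_{\lambda,\zeta}V^{k\omega}$, and by the lift-and-project characterization preceding \eqref{opt:disj-cut} the inequality \eqref{eqn:cut-omega} is valid for $\conv(\Pi^{k\omega})\supseteq\Pi^{k\omega}$. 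On the other hand, taking $\eta^\omega:=Q(y,\omega)$, Proposition~\ref{prop:epigraph} gives $(Q(y,\omega),y)\in\Pi^{k\omega}$, and evaluating \eqref{eqn:cut-omega} at this point yields the claimed bound.

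Then I would aggregate: since $\bs{p}^k\in\mathcal{P}$ is a probability distribution on $\Omega$, we have $p^k_\omega\ge 0$, so multiplying the $\omega$-th bound by $p^k_\omega$ and summing over $\omega$ gives $\sum_{\omega\in\Omega}p^k_\omega(\lambda^{k\omega\top}y+\zeta^{k\omega})\le\sum_{\omega\in\Omega}p^k_\omega Q(y,\omega)$; and because $\bs{p}^k\in\mathcal{P}$, the latter sum is at most $\max_{P\in\mathcal{P}}\sum_{\omega\in\Omega}p_\omega Q(y,\omega)$. Chaining these two inequalities gives exactly what was to be shown. The only point that needs a little care is the identification of the single epigraph variable $\eta$ in \eqref{eqn:agg-cut} and \eqref{opt:masterprob} with the worst-case inner expectation $\max_{P\in\mathcal{P}}\sum_{\omega\in\Omega}p_\omega Q(y,\omega)$ of \eqref{opt:DR-TSS-MISOCP-reform} (using that lowering each $\eta^\omega$ to $Q(y,\omega)$ is without loss because the $p_\omega$ are nonnegative), together with the observation that aggregating with the \emph{fixed} distribution $\bs{p}^k$ computed at iteration $k$ — rather than re-optimizing over $\mathcal{P}$ for the current $y$ — can only weaken the bound; everything else is immediate from Propositions~\ref{prop:epigraph} and~\ref{prop:LP-bd} and the nonnegativity of the probabilities $p^k_\omega$.
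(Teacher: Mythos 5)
Your proposal is correct and follows essentially the same route as the paper's proof: establish the per-scenario bound $Q(y,\omega)\ge \lambda^{k\omega\top}y+\zeta^{k\omega}$ via the lift-and-project validity of \eqref{eqn:cut-omega} on $\conv(\Pi^{k\omega})$ together with Proposition~\ref{prop:epigraph}, then aggregate with the nonnegative weights $p^k_\omega$ and bound $\sum_{\omega}p^k_\omega Q(y,\omega)$ by the worst-case expectation since $\bs{p}^k\in\mathcal{P}$. The additional details you supply (attainment via Proposition~\ref{prop:LP-bd} and the identification of $\eta$ with the inner maximum) are compatible elaborations of the same argument, not a different approach.
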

\begin{proof}
Since \eqref{eqn:cut-omega} is valid for $\conv(\Pi^{k\omega})$,  using Proposition~\ref{prop:epigraph}
we have $Q(y,\omega)\ge \lambda^{k\omega\top}y+\zeta^{k\omega}$. Then it follows that
\begin{displaymath}
\underset{P\in\mP}{\textrm{max}}\;\E_{P}[Q(y,\omega)]\ge \sum_{\omega\in\Omega}p^k_{\omega}Q(y,\omega)
\ge \sum_{\omega\in\Omega} p^k_{\omega}\lambda^{k\omega\top}y + \sum_{\omega\in\Omega}p^k_{\omega}\zeta^{k\omega}.
\end{displaymath}
Therefore, \eqref{eqn:agg-cut} is a valid constraint for \eqref{opt:DR-TSS-MISOCP-reform}.
\end{proof}

\subsection{The algorithm and its properties}
\label{sec:alg-prop}
The decomposition branch-and-cut (DBC) algorithm for solving \newline \eqref{opt:DR-TSS-MISOCP}
is given as Algorithm~\ref{alg:decomp-BC}.
This algorithm may use a partial branch-and-bound tree to generate the
aggregated optimality constraints from Sections~\ref{sec:sc-nd-cut}-\ref{sec:agg-cut}. However, ensuring finite convergence to an optimal solution requires that the scenario problems are solved to optimality. The finite convergence of Algorithm~\ref{alg:decomp-BC} is shown in Theorem~\ref{thm:finite-convg}. We need the following two intermediate results in the proof of this theorem.
%, we show that Algorithm~\ref{alg:decomp-BC} finds an optimal solution of 
%\eqref{opt:DR-TSS-MISOCP} in finitely many iterations
\begin{proposition}\label{prop:extreme-pt}
Let $\mY^{\prime}=\Set*{y\in\mathbb{R}^n}{Fy\ge a,\;0\le y_j\le 1\;\;\forall j\in[n]}$ be a polytope.
Let $y^k$ be an optimal solution of the first-stage problem \eqref{opt:masterprob} at the $k^{\textrm{th}}$ iteration.
Then $y^k$ is an extreme point of $\mY^{\prime}$.
\end{proposition}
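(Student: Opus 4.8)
The plan is to show that the optimal solution $y^k$ of the master problem \eqref{opt:masterprob} must be an extreme point of $\mY^{\prime}$ by exploiting the binary restriction $y \in \mB = \{0,1\}^n$ that is part of the feasible set of \eqref{opt:masterprob}. First I would observe that the feasible region of \eqref{opt:masterprob} is contained in $\mY^{\prime} \cap \mB$ together with the epigraph constraints on $\eta$; in particular $y^k \in \mY^{\prime} \cap \{0,1\}^n$. The key structural fact is that $\mY^{\prime}$ is a polytope with $0 \le y_j \le 1$ among its defining inequalities, so every point of $\mY^{\prime}$ with all coordinates in $\{0,1\}$ is automatically a vertex of $\mY^{\prime}$: such a point satisfies $n$ linearly independent bound constraints ($y_j = 0$ or $y_j = 1$ for each $j \in [n]$) with equality, hence is the unique solution of a full-rank system and cannot be written as a proper convex combination of two distinct points of $\mY^{\prime}$.

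The key steps, in order, are: (i) note $y^k$ is feasible for \eqref{opt:masterprob}, hence $y^k \in \mY^{\prime}$ and $y^k \in \{0,1\}^n$; (ii) for each $j \in [n]$ the constraint $0 \le y^k_j \le 1$ is active (either the lower or upper bound holds with equality since $y^k_j \in \{0,1\}$); (iii) the normal vectors $\{e_j : j \in [n]\}$ of these $n$ active bound constraints are linearly independent; (iv) invoke the standard characterization that a point of a polyhedron at which $n$ linearly independent defining constraints are tight (in $\mathbb{R}^n$) is a vertex/extreme point. Step (iv) is the routine polyhedral-theory fact (e.g. from \cite{conforti-IP-2014} or \cite{bertsimas1997_introd-linear-prog}); alternatively one can argue directly that if $y^k = \tfrac{1}{2}(u + w)$ with $u, w \in \mY^{\prime}$, then for each $j$ the active bound forces $u_j = w_j = y^k_j$, so $u = w = y^k$.

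I do not anticipate a genuine obstacle here — the statement is essentially the observation that $\{0,1\}^n \cap \mY^{\prime} \subseteq \operatorname{vert}(\mY^{\prime})$ whenever the box constraints $0 \le y_j \le 1$ are part of the description of $\mY^{\prime}$. The only point requiring a word of care is making sure that the $[0,1]$ box constraints are indeed part of the description of $\mY^{\prime}$ (they are, by the definition $\mY^{\prime} = \Set*{y \in \mathbb{R}^n}{Fy \ge a,\ 0 \le y_j \le 1\ \forall j \in [n]}$ restated in the proposition), and that "extreme point of $\mY^{\prime}$" is the intended meaning (as opposed to extreme point of the master feasible set, which would be a stronger and false claim once $\eta$ is free). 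Given that, the proof is short: establish feasibility, count $n$ tight independent bound constraints, and conclude.
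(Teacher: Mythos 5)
Your proposal is correct. The paper proves the same fact by contradiction: it writes $y^k$ as a convex combination of extreme points of $\mY^{\prime}$ and runs a two-case analysis (some $u^i$ has a fractional coordinate, or all $u^i$ are integral but not all equal) to exhibit a coordinate $s$ with $0<y^k_s<1$, contradicting binarity. You instead invoke the standard vertex characterization — $y^k\in\{0,1\}^n$ makes all $n$ box constraints $0\le y_j\le 1$ tight, and their normals $\{e_j\}$ are linearly independent, so $y^k$ is a basic feasible solution, hence a vertex — with the midpoint argument ($y^k=\tfrac12(u+w)$ forces $u_j=w_j=y^k_j$ coordinatewise) as a self-contained fallback. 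Both routes rest on the same underlying observation that $\{0,1\}^n\cap\mY^{\prime}\subseteq\operatorname{vert}(\mY^{\prime})$ once the box constraints are part of the description of $\mY^{\prime}$; your rank/active-constraint argument is a bit cleaner in that it avoids the paper's case analysis and the (unneeded) appeal to a decomposition into extreme points, while the paper's argument is more elementary in that it uses only the definition of extreme point. Your cautionary remark that the claim concerns extremality in $\mY^{\prime}$, not in the master feasible set with the free variable $\eta$, is also the right point of care and is consistent with how the paper uses the proposition in Lemma~\ref{lem:Q=Ry+Z}.
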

\begin{proof}
Denote by Master-$k$ the first-stage problem \eqref{opt:masterprob} at the $k^{\textrm{th}}$ iteration.
Let $P=\conv(\mY\cap\mB)$ be the convex hull of the feasible set of the first-stage problem.
It is easy to see that $P$ is a polytope, and $P\subseteq \mY^{\prime}$. 
Note that $y^k$ is a binary vector. We prove by contradiction that $y^k$ is an extreme point of $\mY^{\prime}$.
Suppose $y^k$ is not an extreme point of $\mY^{\prime}$.
Then there exist a $r\in\mathbb{N}_+$, a subset of extreme points $\{u^i\}^r_{i=1}$ of $\mY^{\prime}$, 
and coefficients $\{\alpha^i\}^r_{i=1}$ satisfying the following convex-combination equations:
\begin{equation}\label{eqn:conv-comb2}
y^k=\sum^r_{i=1}\alpha_iu^i,\quad \sum^r_{i=1}\alpha_i=1,\quad \alpha_i>0\;\;\forall i\in[r].
\end{equation}
We divide the discussion into two cases. Case 1: There exists an index $i\in[r]$ and an index $s\in[n]$ such that 
$0<u^i_s<1$. In this case, we must have $0<y^k<1$, which contradicts with that $y^k$ is a binary vector.
Case 2: All the points $\{u^i\}^r_{i=1}$ are integral points. For any index $l\in[n]$, 
we let $\mI^l_0=\Set*{i\in[r]}{u^i_l=0}$ and $\mI^l_1=\Set*{i\in[r]}{u^i_l=1}$.
There must exist an index $s\in[n]$, such that both $\mI^s_0$ and $\mI^s_1$ are non-empty.
Otherwise, all the points in $\{u^i\}^r_{i=1}$ are equal to each other.
Then we have 
\begin{displaymath}
y^k_s=\sum_{i\in\mI^s_0}\alpha_iu^i_s+\sum_{i\in\mI^s_1}\alpha_iu^i_s=\sum_{i\in\mI^s_1}\alpha_i,
\end{displaymath}
and hence $0<y^k_s<1$, which contradicts with the assumption that $y^k$ is a binary vector.
Therefore, $y^k$ must be an extreme point of $\mY^{\prime}$.
\end{proof}

\begin{lemma}\label{lem:Q=Ry+Z}
Let $y^{k}$ be a first-stage feasible solution at the $k^{\textrm{th}}$ iteration, and $\omega\in\Omega$ be a scenario. 
Suppose the second-stage problem \emph{Sub($y^k,\omega$)} is solved to optimality using the branch-and-cut method, 
and $\mathcal{L}(y^k,\omega)$ is the set of leaf nodes of the branch-and-cut tree when the branch-and-cut algorithm terminates. 
Then we have $Q(y^k,\omega)=\lambda^{k\omega*\top}y^k+\zeta^{k\omega*}$, 
where the coefficients $\lambda^{k\omega*}$ and $\zeta^{k\omega*}$ are the optimal solution of \eqref{opt:disj-cut}.
\end{lemma}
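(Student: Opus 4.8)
The plan is to establish the identity $Q(y^k,\omega)=\lambda^{k\omega*\top}y^k+\zeta^{k\omega*}$ by proving the two inequalities separately. The direction $Q(y^k,\omega)\ge\lambda^{k\omega*\top}y^k+\zeta^{k\omega*}$ follows essentially from the validity arguments already in place: since $y^k$ is itself a feasible first-stage solution and $\eta^\omega:=Q(y^k,\omega)$ trivially satisfies $\eta^\omega\ge Q(y^k,\omega)$, Proposition~\ref{prop:epigraph} gives $(Q(y^k,\omega),y^k)\in\Pi^{k\omega}\subseteq\conv(\Pi^{k\omega})$, and the cut \eqref{eqn:cut-omega} with coefficients $(\lambda^{k\omega*},\zeta^{k\omega*})$ coming from the optimal solution of \eqref{opt:disj-cut} is valid for $\conv(\Pi^{k\omega})$ by Proposition~\ref{prop:LP-bd} and the lift-and-project construction; hence $Q(y^k,\omega)\ge\lambda^{k\omega*\top}y^k+\zeta^{k\omega*}$. (Strictly, one should note that the membership $(\lambda^{k\omega*},\zeta^{k\omega*})\in\mathrm{Proj}_{\lambda,\zeta}V^{k\omega}$ is exactly what makes \eqref{eqn:cut-omega} valid, per Theorem~3.1 of \cite{balas1998}.)

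The reverse inequality $Q(y^k,\omega)\le\lambda^{k\omega*\top}y^k+\zeta^{k\omega*}$ is where the hypothesis that Sub($y^k,\omega$) is solved to optimality is used, and it is the heart of the lemma. The idea is: when the branch-and-cut terminates having solved the subproblem exactly, the SOCP relaxation at the leaf node $v^*$ that contains an optimal integer second-stage point $x^{\omega*}$ must have optimal value equal to $Q(y^k,\omega)$ — that leaf is fathomed by optimality/bound, meaning its relaxation bound is no smaller than the incumbent $Q(y^k,\omega)$, while the relaxation is itself a relaxation so its value is $\le Q(y^k,\omega)$; hence equality. By Assumption~\ref{ass:feasible}(b) strong duality holds for that node relaxation, so its dual \eqref{opt:SOCP-node-dual} attains the same value, i.e. $R^{k\omega v^*\top}y^k+S^{k\omega v^*}=Q(y^k,\omega)$ for the corresponding dual multipliers. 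Now I evaluate the objective of \eqref{opt:disj-cut} at the explicit feasible point exhibited in the proof of Proposition~\ref{prop:LP-bd}, namely $\sigma_v=0$, $\gamma_v=\mathbf 0$, $\lambda=\min_{v}R^{k\omega v}$, $\zeta=\min_{v}S^{k\omega v}$ — more precisely I need to argue that $\lambda^{k\omega*\top}y^k+\zeta^{k\omega*}\ge R^{k\omega v^*\top}y^k+S^{k\omega v^*}$. This is where some care is needed: the bound from Proposition~\ref{prop:LP-bd} is $\lambda^{\top}y^k+\zeta\le R^{k\omega v\top}y^k+S^{k\omega v}$ for \emph{every} leaf $v$, giving an upper bound equal to $\min_v(R^{k\omega v\top}y^k+S^{k\omega v})$; I must show the optimum actually attains this minimum. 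For that I pick the specific leaf $\bar v\in\argmin_v(R^{k\omega v\top}y^k+S^{k\omega v})$ and use the cut generated from $\bar v$ alone (i.e. the single-term lift-and-project, with $\sigma_{\bar v}=\gamma_{\bar v}=0$ and, for $v\ne\bar v$, choosing $\sigma_v,\gamma_v$ to make the constraints of $V^{k\omega}$ slack), obtaining a feasible point of $V^{k\omega}$ whose objective in \eqref{opt:disj-cut} equals $R^{k\omega\bar v\top}y^k+S^{k\omega\bar v}$; combined with the universal upper bound this forces the optimum to be exactly $\min_v(R^{k\omega v\top}y^k+S^{k\omega v})$. Finally, since every leaf-node relaxation is a relaxation of Sub($y^k,\omega$) restricted to $\mZ^\omega_v$, the right side of \eqref{eqn:valid-ineq-w-v} evaluated at $y^k$ is a lower bound on that restricted optimum, hence $R^{k\omega v\top}y^k+S^{k\omega v}\le Q(y^k,\omega)$ for all $v$ with nonempty restricted feasible set, and equality at $v^*$; therefore $\min_v(R^{k\omega v\top}y^k+S^{k\omega v})=Q(y^k,\omega)$, which completes the argument.

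The main obstacle is the second, "$\le$" direction, and within it the step showing that the LP \eqref{opt:disj-cut} attains the value $\min_v(R^{k\omega v\top}y^k+S^{k\omega v})$ rather than merely being bounded by it — this requires exhibiting a good feasible dual/lift-and-project certificate, for which I would exploit the fact that at a fixed point $y=y^k$ the cut from a single disjunct already suffices and the constraints of $V^{k\omega}$ for the other disjuncts can be satisfied by making their right-hand sides large (using $\sigma_v\ge0$, $\gamma_v\ge0$ and the feasibility $Fy^k\ge a$, $y^k\le\mathbf 1$). A secondary technical point is justifying that the leaf $v^*$ containing $x^{\omega*}$ has relaxation value exactly $Q(y^k,\omega)$; this is standard branch-and-bound reasoning (the node is fathomed by bound at termination, and its relaxation value cannot exceed $Q(y^k,\omega)$ since $x^{\omega*}$ is feasible for that relaxation by $x^{\omega*}\in\mZ^\omega_{v^*}$ together with Assumption~\ref{ass:feasible}(b) and the validity of the added structure cuts $X^\omega_v x^\omega\ge t^\omega_v$). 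One should also invoke Proposition~\ref{prop:extreme-pt} only if needed — here it does not seem essential, since $y^k$ being fixed is all that is used.
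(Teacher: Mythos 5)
The first half of your argument (the inequality $Q(y^k,\omega)\ge\lambda^{k\omega*\top}y^k+\zeta^{k\omega*}$ via Proposition~\ref{prop:epigraph} and validity of \eqref{eqn:cut-omega}) is fine and matches the paper. The second half has two genuine gaps. First, your certificate for attainment of $\min_v(R^{k\omega v\top}y^k+S^{k\omega v})$ does not exist in general: taking $\lambda=R^{k\omega\bar v}$, $\zeta=S^{k\omega\bar v}$ and trying to satisfy the constraints of $V^{k\omega}$ for the other leaves with some $\sigma_v,\gamma_v\ge 0$ amounts to requiring that the affine function of leaf $\bar v$ lie below that of every other leaf on all of $\mY^{\prime}$, which typically fails (it already fails for scenario $\omega_2$ of the paper's illustrative example: at $y=(0,1)$ the $v_{21}$ function exceeds the $v_{22}$ function, and one checks directly that no $\sigma,\gamma\ge0$ make $(\lambda,\zeta)=((0.75,0.75),0)$ feasible in $V^{k\omega}$ as written). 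The deepest cut at $y^k$ is in general a genuine mixture over disjuncts, so "attainment of the min" cannot be shown by a single-disjunct certificate; moreover, the statement "LP optimum $=\min_v(R^{k\omega v\top}y^k+S^{k\omega v})$" is itself where Proposition~\ref{prop:extreme-pt} enters, contrary to your remark that it is inessential. By LP duality the optimum of \eqref{opt:disj-cut} equals $\min\{\eta:(\eta,y^k)\in\conv(\Pi^{k\omega})\}$, and only because $y^k$ is an extreme point of $\mY^{\prime}$ (so every convex representation must use points with $y$-component equal to $y^k$) does this hull value coincide with $\min_v(R^{k\omega v\top}y^k+S^{k\omega v})$; for a non-extreme $y^k$ it can be strictly smaller, and your argument would break. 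This is exactly the mechanism of the paper's proof, which argues by contradiction from the hull representation and uses extremality to force $y^{\prime}_v=y^k$.

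Second, your closing step has the inequality reversed. A leaf relaxation is a relaxation of $\textrm{Sub}(y^k,\omega)$ restricted to $\mZ^{\omega}_v$, so $R^{k\omega v\top}y^k+S^{k\omega v}$ lower-bounds the \emph{restricted} optimum, which is $\ge Q(y^k,\omega)$, not $\le$; so "$R^{k\omega v\top}y^k+S^{k\omega v}\le Q(y^k,\omega)$ for all $v$" is unjustified (and false for leaves fathomed by bound). What you actually need, and what the paper uses, is the opposite bound: at termination of an exact branch-and-cut run every leaf is fathomed, so every leaf relaxation value satisfies $R^{k\omega v\top}y^k+S^{k\omega v}=q^{\omega\top}x^{v}\ge Q(y^k,\omega)$, while the leaf $v^*$ containing $x^{\omega*}$ gives equality; this is what yields $\min_v(R^{k\omega v\top}y^k+S^{k\omega v})=Q(y^k,\omega)$. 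Note also that even granting your claims ("all $\le Q$, equality at $v^*$"), they would only give $\min_v\le Q(y^k,\omega)$, which is the direction you already have and not the one the lemma needs.
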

\begin{proof}
From Proposition~\ref{prop:extreme-pt}, $y^k$ is an extreme point of $\mY^{\prime}$.
Since the second-stage problem Sub($y^k,\omega$) is solved to optimality,
there must exist a node $v\in\mathcal{L}(y^k,\omega)$
such that the optimal solution $x^{\omega*}$ of Sub($y^k,\omega$) is 
contained in the feasible set associated with node $v^*$.
Strong duality implies that 
\begin{displaymath}
Q(y^k,\omega)=q^{\top}x^{\omega*}=R^{k\omega v^*\top}y^k+S^{k\omega v^*}.
\end{displaymath}
Therefore, the point $(Q(y^k,\omega),y^k)$ is in the epigraph 
\begin{displaymath}
E^{k\omega}_{v^*}=\Set*{(\eta^{\omega},y)\in\mathbb{R}\times\mathbb{R}^n}{
\eta^{\omega}\ge R^{k\omega v^*\top}y+S^{k\omega v^*},\;y\in\mY^{\prime}}.
\end{displaymath}
Let $\{\lambda^{k\omega*},\zeta^{k\omega*},\sigma^{k\omega*}_v\;\forall v\in\mathcal{L}(y^k,\omega)\}$ 
be the optimal solution of the linear program \eqref{opt:disj-cut}.
Then the inequality $\eta^{\omega}\ge\lambda^{k\omega*\top}y+\zeta^{k\omega*}$ 
from \eqref{eqn:cut-omega} must be valid for the point $(Q(y^k,\omega),y^k)$,
i.e., we have $Q(y^k,\omega)\ge\lambda^{k\omega*\top}y^k+\zeta^{k\omega*}$. 
We show that $Q(y^k,\omega)=\lambda^{k\omega*\top}y^k+\zeta^{k\omega*}$ by contradiction.
Assume that $Q(y^k,\omega)>\lambda^{k\omega*\top}y^k+\zeta^{k\omega*}$, 
then there exists an $\epsilon>0$ such that 
$Q(y^k,\omega)-\epsilon\ge\lambda^{k\omega*\top}y^k+\zeta^{k\omega*}$. 
It implies that the point $(Q(y^k,\omega)-\epsilon,y^k)$ is a point in the set 
$\textrm{conv}(\cup_{v\in\mathcal{L}(y^k,\omega)}E^{k\omega}_v)$. 
Therefore, there exist a subset $\mathcal{S}$ of $\mathcal{L}(y^k,\omega)$,
a set of points $\{(\eta^{\prime}_v,y^{\prime}_v)\}_{v\in\mathcal{S}}$, 
and a set of coefficients $\{\alpha_v\}_{v\in\mathcal{S}}$ satisfying that
$(\eta^{\prime}_v,y^{\prime}_v)\in E^{k\omega}_v$ for all $v\in\mathcal{S}$, and
the following convex-combination equations:
\begin{equation}\label{eqn:Q_x_conv_comb}
\begin{aligned}
&Q(y^k,\omega)-\epsilon = \sum_{v\in\mathcal{S}}\alpha_v\eta^{\prime}_v, 
\quad y^k=\sum_{v\in\mathcal{S}}\alpha_vy^{\prime}_v,  
\quad \sum_{v\in\mathcal{S}}\alpha_v=1, 
\quad \alpha_v> 0 \quad \forall v\in\mathcal{S}.
\end{aligned}
\end{equation}
Since $y^{\prime}_v\in\mY^{\prime}$ for all $v\in\mathcal{S}$ and
$y^k$ is an extreme point of $\mY^{\prime}$, it follows that $y^{\prime}_{v}=y^k$
for all $v\in\mathcal{S}$ in \eqref{eqn:Q_x_conv_comb}. 
The equations in \eqref{eqn:Q_x_conv_comb}
further imply that there exists a node $v_0\in\mathcal{S}$
satisfying $\eta^{\prime}_{v_0}\le Q(y^k,\omega)-\epsilon$ and 
$\eta^{\prime}_{v_0}\ge R^{k\omega v_0\top}y^{\prime}_{v_0}+S^{k\omega v_0}
=R^{k\omega v_0\top}y^k+S^{k\omega v_0}$. Let $x^{v_0}$ be the optimal solution 
to the node-$v_0$ relaxation of $\textrm{Sub}(y^k,\omega)$ in the branch-and-cut method. 
Using strong duality, we have 
\begin{displaymath}
Q(y^k,\omega)-\epsilon\ge\eta^{\prime}_{v_0}\ge R^{k\omega v_0\top}y^k+S^{k\omega v_0}
=q^{\omega\top}x^{v_0}\ge Q(y^k,\omega),
\end{displaymath}
which leads to a contradiction. Therefore, we must have $Q(y^k,\omega)=\lambda^{k\omega*\top}y^k+\zeta^{k\omega*}$.
\end{proof}
\begin{remark}
We now discuss an implication of Lemma~\ref{lem:Q=Ry+Z}.
Suppose $y^k$ is the optimal solution of the master problem at the $k^{\textrm{th}}$ iteration.
If $y^k$ is not the optimal solution of \eqref{opt:DR-TSS-MISOCP},
Lemma~\ref{lem:Q=Ry+Z} implies that in a later iteration $k^{\prime}$ ($k^{\prime}>k$),
we must have $y^{k^{\prime}}\neq y^k$. Otherwise, the master problem at Iteration $k^{\prime}$ 
(a relaxation of \eqref{opt:DR-TSS-MISOCP}) will return a lower bound that is no smaller
than the upper bound value provided in Iteration $k$. It means that if $y^k$ is not an optimal solution 
of \eqref{opt:DR-TSS-MISOCP}, it will be cut off by the scenario constraint 
$\eta^{\omega}\ge \lambda^{k\omega*\top}y+\zeta^{k\omega*}$. 
Formal analysis to explain this implication is given in the proof of Theorem~\ref{thm:finite-convg}.
\end{remark}

\begin{theorem}\label{thm:finite-convg} 
Suppose there exists an integer $N$ such that after $N$ iterations in Algorithm~\ref{alg:decomp-BC},
each second-stage problem \emph{Sub($y^k,\omega$)} with $k>N$  is solved to optimality using a branch-and-cut algorithm. 
Then Algorithm~\ref{alg:decomp-BC} returns an optimal solution of \eqref{opt:DR-TSS-MISOCP} after finitely many iterations.
\end{theorem}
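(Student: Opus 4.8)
The plan is to argue by contradiction: suppose Algorithm~\ref{alg:decomp-BC} does not terminate with an optimal solution of \eqref{opt:DR-TSS-MISOCP}. Since after iteration $N$ every second-stage problem is solved to optimality, Lemma~\ref{lem:Q=Ry+Z} applies at every iteration $k>N$ and every scenario $\omega$, giving $Q(y^k,\omega)=\lambda^{k\omega*\top}y^k+\zeta^{k\omega*}$. First I would note the finiteness ingredient: by Proposition~\ref{prop:extreme-pt} each first-stage iterate $y^k$ is an extreme point of the polytope $\mY'$, and since $y^k$ is binary there are only finitely many candidates in $\mY\cap\mB$. Hence the infinite sequence $\{y^k\}_{k>N}$ must repeat some value; I would show that a repeat forces termination, which is the contradiction.

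The key step is to show that if $y^k$ (for $k>N$) is \emph{not} optimal for \eqref{opt:DR-TSS-MISOCP}, then $y^k$ cannot reappear as the master solution at any later iteration $k'>k$. Using Lemma~\ref{lem:Q=Ry+Z} at iteration $k$, the aggregated cut \eqref{eqn:agg-cut} added at the end of iteration $k$ satisfies, when evaluated at $y^k$,
\begin{displaymath}
\sum_{\omega\in\Omega} p^k_{\omega}\lambda^{k\omega*\top}y^k + \sum_{\omega\in\Omega}p^k_{\omega}\zeta^{k\omega*}
= \sum_{\omega\in\Omega} p^k_{\omega}Q(y^k,\omega).
\end{displaymath}
By the choice of $\bs{p}^k$ as a maximizer in \eqref{opt:worst-distr} and the validity established in Proposition~\ref{prop:agg-constr-valid}, combined with Lemma~\ref{lem:Q=Ry+Z} applied to \emph{every} scenario, one checks that $\sum_{\omega}p^k_\omega Q(y^k,\omega)=\max_{P\in\mP}\E_P[Q(y^k,\omega)]$; indeed the linear functionals $\lambda^{k\omega*\top}y+\zeta^{k\omega*}$ agree with $Q(\cdot,\omega)$ at $y^k$, so $\bs p^k$ also maximizes $\sum_\omega p_\omega Q(y^k,\omega)$ over $\mP$. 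Consequently, if $y^k$ were feasible in a later master problem Master-$k'$ that contains the cut \eqref{eqn:agg-cut} from iteration $k$, then the optimal value of Master-$k'$ would be at least $c^\top y^k + \max_{P\in\mP}\E_P[Q(y^k,\omega)]$, i.e. the true objective value of \eqref{opt:DR-TSS-MISOCP} at $y^k$. Since Master-$k'$ is a relaxation of \eqref{opt:DR-TSS-MISOCP}, its optimal value is also a lower bound on the true optimum; hence $y^{k'}$ would have objective value no worse than any feasible point, so $y^{k'}$ (and in particular $y^k$, if $y^{k'}=y^k$) would be optimal — contradicting non-optimality of $y^k$.

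Putting it together: among the iterates $\{y^k\}_{k>N}$, infinitely many must equal some fixed extreme point $\bar y$ by finiteness; pick two indices $k<k'$ with $y^k=y^{k'}=\bar y$ and $k>N$. By the previous paragraph, $\bar y$ must then be optimal for \eqref{opt:DR-TSS-MISOCP}. It remains to observe that once the master solution equals a true optimizer $\bar y$, the algorithm's termination test succeeds: the current lower bound $c^\top\bar y+\bar\eta$ from the master equals the upper bound $c^\top\bar y+\sum_\omega p^{k'}_\omega Q(\bar y,\omega)=c^\top\bar y+\max_{P\in\mP}\E_P[Q(\bar y,\omega)]$ recorded after solving the second-stage problems, so the gap closes and the algorithm stops reporting $\bar y$ as optimal. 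This contradicts the assumption of non-termination with an optimal solution, completing the proof.

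The main obstacle I anticipate is the bookkeeping in the middle paragraph: one must carefully track that the worst-case distribution $\bs p^k$ selected via \eqref{opt:worst-distr} on the \emph{cut functionals} is simultaneously a worst-case distribution for the \emph{true} recourse values $Q(y^k,\cdot)$ at the point $y^k$ — this is exactly where Lemma~\ref{lem:Q=Ry+Z} (equality at $y^k$ for each $\omega$) is essential — and that the aggregated cut, though derived from a specific $\bs p^k$, still yields a lower bound valid for the distributionally robust objective at $y^k$ that is \emph{tight} there. The rest (finiteness of extreme binary points, relaxation/lower-bound sandwich, termination test) is routine once this equality is in hand.
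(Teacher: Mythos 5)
Your proof is correct and follows essentially the same route as the paper: tightness of the aggregated cut at the current iterate (via Lemma~\ref{lem:Q=Ry+Z} together with the choice of $\bs{p}^k$ in \eqref{opt:worst-distr}), finiteness of the binary first-stage solutions forcing a repeated iterate $y^{k_1}=y^{k_2}$ with $k_1,k_2>N$, and the resulting lower bound at the repeat dominating the upper bound, forcing termination and hence the contradiction. One phrasing caveat: mere feasibility of $y^k$ in Master-$k'$ does not bound its optimal value from below (the master is a minimization), so the inequality $c^{\top}y^{k'}+\eta^{k'}\ge c^{\top}y^k+\max_{P\in\mP}\E_P[Q(y^k,\omega)]$ needs $y^{k'}=y^k$ — which is precisely the case you invoke, so the argument stands.
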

\begin{proof}
Let  the first-stage problem at iteration $k$ be denoted by Master-$k$.
For any $k>N$, Lemma~\ref{lem:Q=Ry+Z} implies that 
$Q(y^k,\omega)=\lambda^{k\omega\top}y^k+\zeta^{k\omega}$.
Therefore, we have
\begin{equation}\label{eqn:maxEQ}
G(y^k):=\underset{P\in\mathcal{P}}{\textrm{max}}\;\E_P[Q(y^k,\omega)]=
\sum_{\omega\in\Omega} p^k_{\omega}\lambda^{k\omega\top}y^k + \sum_{\omega\in\Omega}p^k_{\omega}\zeta^{k\omega}.
\end{equation}
Based on the mechanism of the algorithm, it is clear that if the algorithm terminates in finitely many iterations,
it returns an optimal solution. 
We only need to show that the algorithm must terminate in finitely many iterations.
%Since there are a finite number of feasible first-stage solutions,
%when the first-stage solutions are input into 
%\eqref{opt:disj-cut} and \eqref{opt:worst-distr}, 
%only finitely many extreme point solutions for $\lambda$, $\zeta$ will be generated. 
%It implies that the set of possible values for $G^k$ is finite.
Assume that the algorithm does not terminate in finitely many iterations.
Then it must generate an infinite sequence of first-stage solutions $\{y^k\}^{\infty}_{k=1}$. Consider the subsequence 
$\{y^k\}^{\infty}_{k=N+1}$. There must exist two first-stage solutions $y^{k_1}$ and $y^{k_2}$ in this subsequence
satisfying that $y^{k_1}=y^{k_2}$, with $k_1<k_2$. At the end of iteration $k_1$ the upper bound $U^{k_1}$ satisfies
\begin{equation}\label{eqn:U}
U^{k_1}= c^{\top}y^{k_1}+\sum_{\omega\in\Omega}p^{k_1}_{\omega}Q(y^{k_1},\omega)=
c^{\top}y^{k_1} + G(y^{k_1}),
\end{equation}
where \eqref{eqn:maxEQ} is used to obtain the last equation.
The optimal value of Master-$k_2$ gives the lower bound $L^{k_2}=c^{\top}y^{k_2}+\eta^{k_2}$. 
Since $k_2>k_1$,  Master-$k_2$ has the following constraint:
\begin{equation}\label{eqn:eta}
\eta \ge \sum_{\omega\in\Omega} p^{k_1}_{\omega}\lambda^{k_1\omega\top}y + \sum_{\omega\in\Omega}p^{k_1}_{\omega}\zeta^{k_1\omega}.
\end{equation}
Therefore, we conclude that
\begin{equation}\label{eqn:Lk2}
\begin{aligned}
L^{k_2}&=c^{\top}y^{k_2}+\eta^{k_2} \\
&\ge c^{\top}y^{k_2}+
\sum_{\omega\in\Omega} p^{k_1}_{\omega}\lambda^{k_1\omega\top}y^{k_2} + \sum_{\omega\in\Omega}p^{k_1}_{\omega}\zeta^{k_1\omega} \\
&=c^{\top}y^{k_1}+\sum_{\omega\in\Omega} p^{k_1}_{\omega}\lambda^{k_1\omega\top}y^{k_1} + \sum_{\omega\in\Omega}p^{k_1}_{\omega}\zeta^{k_1\omega} \\
&=c^{\top}y^{k_1}+G(y^{k_1})=U^{k_1}= U^{k_2},
\end{aligned}
\end{equation}
where we use the fact that $y^{k_1}=y^{k_2}$, and  inequalities \eqref{eqn:U}--\eqref{eqn:eta} to obtain \eqref{eqn:Lk2}.
Hence, we have no optimality gap at the solution $y^{k_1}$, and the algorithm should have terminated at 
iteration $k_1$. 
\end{proof}
We remark that Algorithm~\ref{alg:decomp-BC} terminates after generating a finitely many linear
inequalities, even though the set $\mP$ is any convex set and the second-stage problems are second-order-cone
programs, both of which allow to have infinitely many extreme point solutions.
\begin{algorithm}
	\caption{A decomposition branch-and-cut algorithm for solving \eqref{opt:DR-TSS-MISOCP}.}
	\label{alg:decomp-BC}
	\begin{algorithmic}
	\State{Initialization: $L\gets -\infty$, $U\gets \infty$, $k\gets 1$.}
	\While{$U-L>\epsilon$}
		\State{Solve the first-stage problem \eqref{opt:masterprob} to optimality. Let $(\eta^k,y^k)$ be the optimal solution.}
		\State{Update the lower bound as $L\gets\textrm{current optimal value of the master problem}$.}
		\State{Set the current best solution as $y^*\gets y^k$.}
		\For{$\omega\in\Omega$:}
			\State{Solve every second-stage problem $\textrm{Sub}(y^k,\omega)$ for $\omega\in\Omega$ to a suitable accuracy}
			\State{$\quad$using the branch-and-cut method.}
			\State{Let $x^{k\omega}$ be the best feasible solution identified in solving $\textrm{Sub}(y^k,\omega)$.}
			\State{Derive the scenario based optimality constraint \eqref{eqn:cut-omega} by solving \eqref{opt:disj-cut}.}
		\EndFor
		\State{Solve the optimization \eqref{opt:worst-distr} to get the current worst-case probability distribution $\bs{p}^k$.}
		\State{Aggregate the inequalities \eqref{eqn:cut-omega} to get the inequality \eqref{eqn:agg-cut}.}
		\State{Add the aggregated optimality constraint \eqref{eqn:agg-cut} to the first-stage problem \eqref{opt:masterprob}.}
		\State{Update the upper bound as $U\gets \textrm{min}\{U,\; c^{\top}y^k+\sum_{\omega\in\Omega}p^k_{\omega}q^{\omega\top}x^{k\omega}\}$}.
		\State{$k\gets k+1$.}
	\EndWhile
	\State{Return $y^*$.}
	\end{algorithmic}
\end{algorithm}

\section{Generalization of the Decomposition Method for DR-TSS Mixed-integer Conic Programs}
\label{sec:DR-TSS-MICP}
The decomposition method from the previous section can be generalized for solving DR-TSS mixed-integer convex conic programs (DR-TSS-MICP). These models allow more general cone representable problems such as those having  SDP cones, and exponential cones \cite{chandrasekaran2016} by making an appropriate choice of $\mK$. The main difference in the algorithm presented in this section from that in the previous section is that here we use the convex conic dual for each leaf node relaxation in the second-stage problem of DR-TSS-MICP, which is a generalization of the SOCP dual for DR-TSS-MISOCP.
We make the following assumption on \eqref{opt:DR-TSS-MICP} which is
counterpart to Assumption~\ref{ass:feasible} for \eqref{opt:DR-TSS-MICP}:
\begin{assumption}\label{ass:feasb-micp}
\begin{itemize}
	\item[\emph{(a)}] The \eqref{opt:DR-TSS-MICP} problem has a complete recourse, i.e., for any feasible solution $y$
			  of the first stage problem, all second-stage problems are feasible.
	\item[\emph{(b)}] For any feasible first-stage solution $y$ and for any $x^1\in\mathbb{Z}^{l_1}$ satisfying $x^1_i\in[z^{L\omega}_i,\;z^{U\omega}_i]$, 
			   there exists a $x^2\in\mathbb{R}^{l_2}$
			   such that the solution $x=[x^1;x^2]$ is strictly feasible to the second-stage problem \eqref{opt:micp-subprob}.
\end{itemize}
\end{assumption}
For a given first-stage solution $y^k$, we solve the second-stage problem \eqref{opt:micp-subprob}
using the branch-and-cut method. 
We use $\textrm{CSub}(y^k,\omega)$ to denote the second-stage problem \eqref{opt:micp-subprob} 
associated with $y^k$ and $\omega$, and use $\mL(y^k,\omega)$ to denote the set of leaf nodes
when solving $\textrm{CSub}(y^k,\omega)$.
The relaxation of $\textrm{CSub}(y^k,\omega)$ at a node $v\in\mL(y^k,\omega)$ is given by:
\begin{equation}\label{opt:micp-node}
\begin{aligned}
&\underset{x^{\omega}}{\textrm{min}}\;\; q^{\omega\top}x^{\omega}  & \qquad \textrm{dual variables}\\
&\textrm{ s.t. } W^{\omega} x^{\omega} \ge r^{\omega}-T^{\omega} y^k,   &\qquad \gamma^{k\omega v} \\
&\qquad x^{\omega}\ge z^{L\omega}_v,\; x^{\omega}\le z^{U\omega}_v,  &\qquad \tau^{k\omega v}_L,\;\tau^{k\omega v}_U \\
&\qquad x^{\omega}\in \mK_2,  &\qquad s^{\omega} \in \mK^*_2,
\end{aligned}
\end{equation}
where $\mK^*_2$ is the dual cone of $\mK_2$ defined as \newline
$\mK^*_2=\Set*{s\in\mathbb{R}^{l_1+l_2}}{\braket{s}{x}\ge 0\;\forall x\in\mK_2}$.
We dualize \eqref{opt:micp-node} using the conic duality theory \cite{shapiro2001_conic-lp}.
The dual problem of \eqref{opt:micp-node} is formulated as:
\begin{equation}\label{opt:micp-node-dual}
\begin{aligned}
&\textrm{max}\;\; \gamma^{k\omega v\top}(r^{\omega}-T^{\omega}y^k)
+\tau^{k\omega v\top}_Lz^{L\omega}_v-\tau^{k\omega v\top}_Uz^{U\omega}_v, \\
&\textrm{ s.t. }\; q^{\omega}-s^{\omega}-W^{\omega\top}\gamma^{k\omega v}+\tau^{k\omega v}_U-\tau^{k\omega v}_L\in\mK^*_2, \\
&\qquad\; \gamma^{k\omega v},\;\tau^{k\omega v},\;\tau^{k\omega v}_U\ge 0,\; s^{\omega}\in\mK^*_2.
\end{aligned}
\end{equation}
Moreover, the optimal objective value of \eqref{opt:micp-node-dual} 
is equal to the optimal value of \eqref{opt:micp-node} (see Appendix~\ref{app:strong-dual}).
The following proposition is the counterpart of 
Proposition~\ref{prop:dual-ineq} for the case of general cone.
\begin{proposition}\label{prop:conic-dual-ineq}
Let $y^*$ be an optimal solution of \eqref{opt:DR-TSS-MICP}, and let $x^{\omega*}$ be an optimal second-stage
solution of $\emph{CSub}(y^*,\omega)$. Let $\gamma^{k\omega v}$, $\tau^{k\omega v}$, $\tau^{k\omega v}_U$, and $s$
be the dual variable values determined by solving the relaxation problem of  $\emph{CSub}(y^k,\omega)$ at node $v$.
If $z^{L\omega}_v\le x^{\omega*}\le z^{U\omega}_v$, then the following inequality holds:
\begin{equation}\label{eqn:Q>Ry+S}
Q(y^*,\omega)\ge -\gamma^{k\omega v\top}T^{\omega}y^* + \gamma^{k\omega v\top}r^{\omega}
+\tau^{k\omega v\top}_Lz^{L\omega}_v-\tau^{k\omega v\top}_Uz^{U\omega}_v:=\psi(y^*).
\end{equation}  
\end{proposition}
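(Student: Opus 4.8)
The plan is to run the same weak-duality computation as in the proof of Proposition~\ref{prop:dual-ineq}, with the second-order-cone inner-product inequalities replaced by the defining inequality of the dual cone $\mK^*_2$, namely $\braket{s}{x}\ge 0$ whenever $x\in\mK_2$ and $s\in\mK^*_2$. First I would record the identity $Q(y^*,\omega)=q^{\omega\top}x^{\omega*}$, which holds simply because $x^{\omega*}$ is an optimal, hence feasible, solution of $\mathrm{CSub}(y^*,\omega)$; no strong duality on the primal side is needed here, only the dual feasibility of the node-$v$ relaxation dual.

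Next I would invoke dual feasibility of \eqref{opt:micp-node-dual}: the multipliers $(\gamma^{k\omega v},\tau^{k\omega v}_L,\tau^{k\omega v}_U,s^{\omega})$ returned when solving the node-$v$ relaxation of $\mathrm{CSub}(y^k,\omega)$ satisfy $w:=q^{\omega}-s^{\omega}-W^{\omega\top}\gamma^{k\omega v}+\tau^{k\omega v}_U-\tau^{k\omega v}_L\in\mK^*_2$, together with $\gamma^{k\omega v},\tau^{k\omega v}_L,\tau^{k\omega v}_U\ge 0$ and $s^{\omega}\in\mK^*_2$. Substituting $q^{\omega}=w+s^{\omega}+W^{\omega\top}\gamma^{k\omega v}-\tau^{k\omega v}_U+\tau^{k\omega v}_L$ into $q^{\omega\top}x^{\omega*}$ splits $Q(y^*,\omega)$ into five terms, and I would bound each using a property of $x^{\omega*}$: (i) $\braket{w}{x^{\omega*}}\ge 0$ and $\braket{s^{\omega}}{x^{\omega*}}\ge 0$, since $x^{\omega*}\in\mK_2$ while $w,s^{\omega}\in\mK^*_2$; (ii) $\gamma^{k\omega v\top}W^{\omega}x^{\omega*}\ge\gamma^{k\omega v\top}(r^{\omega}-T^{\omega}y^*)$, since $\gamma^{k\omega v}\ge 0$ and $x^{\omega*}$ satisfies the recourse inequality of $\mathrm{CSub}(y^*,\omega)$; and (iii) $-\tau^{k\omega v\top}_Ux^{\omega*}\ge-\tau^{k\omega v\top}_Uz^{U\omega}_v$ and $\tau^{k\omega v\top}_Lx^{\omega*}\ge\tau^{k\omega v\top}_Lz^{L\omega}_v$, since $\tau^{k\omega v}_L,\tau^{k\omega v}_U\ge 0$ and, by the hypothesis $z^{L\omega}_v\le x^{\omega*}\le z^{U\omega}_v$, the point $x^{\omega*}$ satisfies the node box constraints. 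Summing these five inequalities makes the two conic terms drop to zero and leaves exactly $\gamma^{k\omega v\top}r^{\omega}-\gamma^{k\omega v\top}T^{\omega}y^*+\tau^{k\omega v\top}_Lz^{L\omega}_v-\tau^{k\omega v\top}_Uz^{U\omega}_v=\psi(y^*)$, which is the asserted inequality \eqref{eqn:Q>Ry+S}.

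The only point that needs care is the one highlighted after Proposition~\ref{prop:dual-ineq}: the multipliers are generated from the relaxation at node $v$ of $\mathrm{CSub}(y^k,\omega)$, which is built around $y^k$, yet the estimate is evaluated at $y^*$ with second-stage point $x^{\omega*}$. This is legitimate because the cone constraint of \eqref{opt:micp-node-dual} does not involve $y$ at all, and because $x^{\omega*}$ — being feasible for $\mathrm{CSub}(y^*,\omega)$ and, by assumption, additionally respecting the node box $z^{L\omega}_v\le x^{\omega*}\le z^{U\omega}_v$ — verifies every primal inequality against which the nonnegative multipliers $\gamma^{k\omega v},\tau^{k\omega v}_L,\tau^{k\omega v}_U$ are applied, as well as the conic membership $x^{\omega*}\in\mK_2$. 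Assumption~\ref{ass:feasb-micp}(b), via the strong-duality result of Appendix~\ref{app:strong-dual}, ensures the node relaxation dual is solvable so the multipliers are well defined. I do not anticipate a real obstacle: the argument is a pure weak-duality estimate, and the only work is bookkeeping the five terms and getting the conic inner-product signs right.
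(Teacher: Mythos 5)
Your proposal is correct and is essentially the paper's own proof: both are the same weak-duality computation, using dual feasibility of the node-$v$ multipliers (membership of $q^{\omega}-s^{\omega}-W^{\omega\top}\gamma^{k\omega v}+\tau^{k\omega v}_U-\tau^{k\omega v}_L$ in $\mK^*_2$) together with primal feasibility of $x^{\omega*}$ for $\mathrm{CSub}(y^*,\omega)$ and the node box constraints; the paper merely writes the chain starting from $\psi(y^*)$ and bounding upward to $\braket{q^{\omega}}{x^{\omega*}}$, whereas you expand $q^{\omega\top}x^{\omega*}$ downward. Your observation that only feasibility (not strong duality) is needed for the inequality itself is also consistent with the paper's argument.
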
 
\begin{proof}
We reorganize terms in the objective and constraints of the primal and dual problems, 
and then make use of primal, dual feasibility and strong duality. 
Specifically, we have
\begin{equation}
\begin{aligned}
&\psi(y^*)= -\gamma^{k\omega v\top}T^{\omega}y^* + \gamma^{k\omega v\top}r^{\omega}
+\tau^{k\omega v\top}_Lz^{L\omega}_v-\tau^{k\omega v\top}_Uz^{U\omega}_v \\
&=\gamma^{k\omega v\top}(r^{\omega}-T^{\omega}y^*)
+\tau^{k\omega v\top}_Lz^{L\omega}_v-\tau^{k\omega v\top}_Uz^{U\omega}_v \\
&\le \gamma^{k\omega v\top}(r^{\omega}-T^{\omega}y^*)
+\tau^{k\omega v\top}_Lz^{L\omega}_v-\tau^{k\omega v\top}_Uz^{U\omega}_v \\
&\quad + \braket{q^{\omega}-s-W^{\omega\top}\gamma^{k\omega v}+\tau^{k\omega v}_U-\tau^{k\omega v}_L}{x^{\omega*}} \\
&=\braket{q^{\omega}}{x^{\omega*}}-\gamma^{k\omega v\top}(W^{\omega}x^{\omega*}-r^{\omega}+T^{\omega}y^*)-\braket{s}{x^{\omega*}} \\
&\quad -\tau^{k\omega v\top}_L(x^{\omega*}-z^{L\omega}_v)-\tau^{k\omega v\top}_U(z^{U\omega}_v-x^{\omega*}) \\
&\le \braket{q^{\omega}}{x^{\omega*}}=Q(y^*,\omega),
\end{aligned}
\end{equation}
where the first and second inequalities make use of the primal, dual feasibility condition. 
\end{proof}
Due to Proposition~\ref{prop:conic-dual-ineq}, 
as in \eqref{eqn:Ekvw}, we construct an epigraph for each node $v\in\mL(y^k,\omega)$ as follows:
\begin{equation}
E^{k\omega}_v=\Set*{(\eta^{\omega},y)\in\mathbb{R}\times\mathbb{R}^n}{\eta^{\omega}\ge R^{k\omega v\top}y+S^{k\omega v},\;y\in\mY^{\prime}},
\end{equation}
where $R^{k\omega v}$ and $S^{k\omega v}$ are the coefficient vector of $y^*$ and the constant in
the right-side expression of \eqref{eqn:Q>Ry+S}, i.e., 
\begin{equation}\label{eqn:RS}
\begin{aligned}
&R^{k\omega v}=-T^{\omega\top}\gamma^{k\omega v}  \\
&S^{k\omega v}=\gamma^{k\omega v\top}r^{\omega}
+\tau^{k\omega v\top}_Lz^{L\omega}_v-\tau^{k\omega v\top}_Uz^{U\omega}_v. 
\end{aligned}
\end{equation}
The set $\mY^{\prime}$ is a polytope defined
as $\mY^{\prime}=\Set*{y\in\mathbb{R}^n}{Fy\ge a,\;0\le y_j\le 1\;\;\forall j\in[n]}$.
The scenario based optimality constraint in this case is the same as \eqref{eqn:cut-omega},
with coefficients being the optimal solution to the linear program \eqref{opt:worst-distr},
where the coefficients $R^{k\omega v}$, $S^{k\omega v}$  are given in \eqref{eqn:RS}.
The scenario based optimality constraint in this case is 
\begin{equation}\label{eqn:cp-disj-cut}
\eta^{\omega}\ge\lambda^{k\omega\top}y+\zeta^{k\omega},
\end{equation}
where the coefficients $\lambda^{k\omega},\zeta^{k\omega}$ are the optimal solution of
the linear program: 
\begin{equation}\label{opt:cp-disj-cut}
\begin{aligned}
&\textrm{max}\;\;\lambda^{\top}y^k+\zeta \\
&\textrm{ s.t. }\; \{\lambda,\zeta,\sigma_{v},\gamma_v\;\forall v\in\mathcal{L}(y^k,\omega)\}\in V^{k\omega},
\end{aligned}
\end{equation}
where $V^{k\omega}$ in \eqref{opt:cp-disj-cut} is defined as follows:
\begin{equation}\label{eqn:cp-Vk}
V^{k\omega}=\Set*{\begin{aligned}&\lambda,\zeta,\sigma_{v},\gamma_v\\ 
&\forall v\in\mathcal{L}(x^k,\omega)\end{aligned}}
{
\begin{aligned}
&\lambda-R^{k\omega v}+F^{\top}\sigma_v-\gamma_v \le 0 \quad \forall v\in\mathcal{L}(x^k,\omega), \\
&\zeta-\sigma^{\top}_{v}a-S^{k\omega v}+\gamma^{\top}_v\bs{1} \le 0 \quad \forall v\in\mathcal{L}(x^k,\omega), \\
&\sigma_{v}\ge 0,\;\gamma_v\ge 0
\end{aligned}
},
\end{equation}
and the coefficients $R^{k\omega v}$ and $S^{k\omega v}$ are given in \eqref{eqn:RS}. The worst-case probability distribution based on the current first-stage solution $y^k$
is determined via the optimization problem \eqref{opt:worst-distr}.
The aggregated constraint in this case is 
\begin{equation}\label{eqn:cp-agg-cut}
\eta\ge \sum_{\omega\in\Omega}p_{\omega}\lambda^{k\omega\top}y+\zeta^{k\omega},
\end{equation}
where the worst-case probability distribution is given by the optimal solution of:
\begin{equation}\label{opt:cp-worst-case-distr}
\underset{\bs{p}\in\mathcal{P}}{\textrm{max}}\;\sum_{\omega\in\Omega}p_{\omega}(\lambda^{k\omega\top}y^k+\zeta^{k\omega}).
\end{equation}
Algorithm~\ref{alg:Cone-decomp-BC} to solve \eqref{opt:DR-TSS-MICP} is analogous to Algorithm~\ref{alg:decomp-BC}.
The master problem of \eqref{opt:DR-TSS-MICP} at Iteration $k$ is as follows:
\begin{equation}\label{opt:cone-master}
\begin{aligned}
&\underset{y}{\textrm{min}}\; c^{\top}y+\eta  \\
&\textrm{ s.t. } \eta\ge h^{l}-(f^{l})^{\top}y \quad \forall l\in\{1,\ldots,k-1\}, \\
&\qquad Fy\ge a, \\
&\qquad y\in\mK_1\cap\{0,1\}^n.
\end{aligned}
\end{equation}
Theorem~\ref{thm:cone-finite-convg} states that Algorithm~\ref{alg:Cone-decomp-BC} can solve 
\eqref{opt:DR-TSS-MICP} to optimality in finitely many iterations. 
A proof of Theorem~\ref{thm:cone-finite-convg} is analogous to that of Theorem~\ref{thm:finite-convg}.
\begin{theorem}\label{thm:cone-finite-convg}
Suppose there exists an integer $N$ such that after $N$ iterations in Algorithm~\ref{alg:decomp-BC},
each second-stage problem \emph{CSub($y^k,\omega$)} with $k>N$  is solved to optimality using a branch-and-cut algorithm. 
Then Algorithm~\ref{alg:decomp-BC} returns an optimal solution of \eqref{opt:DR-TSS-MICP} after finitely many iterations.
\end{theorem}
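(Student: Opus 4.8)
The plan is to follow the proof of Theorem~\ref{thm:finite-convg} essentially verbatim, substituting the conic duality facts of Section~\ref{sec:DR-TSS-MICP} for their SOCP analogs. First I would establish the conic counterpart of Lemma~\ref{lem:Q=Ry+Z}: if $\textrm{CSub}(y^k,\omega)$ is solved to optimality and $\mL(y^k,\omega)$ is the set of leaf nodes at termination, then $Q(y^k,\omega)=\lambda^{k\omega*\top}y^k+\zeta^{k\omega*}$, where $(\lambda^{k\omega*},\zeta^{k\omega*})$ is an optimal solution of \eqref{opt:cp-disj-cut}. Three ingredients, all already available, go into this. (1) The analog of Proposition~\ref{prop:extreme-pt} holds in the conic setting, since its proof uses only that $y^k$ is a binary vector and that $\conv(\mK_1\cap\mB\cap\{y:Fy\ge a\})\subseteq\mY^{\prime}$, both of which still hold; hence $y^k$ is an extreme point of $\mY^{\prime}$. (2) By Assumption~\ref{ass:feasb-micp}(b) the leaf-node hyper-rectangles partition $\mZ^\omega$, so the optimizer $x^{\omega*}$ lies in $\mZ^\omega_{v^*}$ for some $v^*\in\mL(y^k,\omega)$. (3) Conic strong duality, with no gap and dual attainment, holds for every leaf-node relaxation \eqref{opt:micp-node} by Assumption~\ref{ass:feasb-micp}(b) and Appendix~\ref{app:strong-dual}, so $Q(y^k,\omega)=q^{\omega\top}x^{\omega*}=R^{k\omega v^*\top}y^k+S^{k\omega v^*}$ with $R^{k\omega v},S^{k\omega v}$ as in \eqref{eqn:RS}. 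The contradiction step of Lemma~\ref{lem:Q=Ry+Z} then carries over unchanged: a strict gap $Q(y^k,\omega)>\lambda^{k\omega*\top}y^k+\zeta^{k\omega*}$ would place $(Q(y^k,\omega)-\epsilon,y^k)$ in $\conv(\cup_{v}E^{k\omega}_v)$; extremality of $y^k$ forces every component point to have first-stage part $y^k$, producing a leaf node $v_0$ with $\eta^{\prime}_{v_0}\le Q(y^k,\omega)-\epsilon$ while $\eta^{\prime}_{v_0}\ge R^{k\omega v_0\top}y^k+S^{k\omega v_0}=q^{\omega\top}x^{v_0}\ge Q(y^k,\omega)$, a contradiction.

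Next I would record the conic analog of \eqref{eqn:maxEQ}. Since $\bs{p}^k$ maximizes $\sum_{\omega}p_\omega(\lambda^{k\omega\top}y^k+\zeta^{k\omega})$ over $\mP$ in \eqref{opt:cp-worst-case-distr}, and each $Q(y^k,\omega)$ equals $\lambda^{k\omega\top}y^k+\zeta^{k\omega}$ by the step above, it follows that for all $k>N$, $G(y^k):=\max_{P\in\mP}\E_P[Q(y^k,\omega)]=\sum_{\omega\in\Omega}p^k_\omega\lambda^{k\omega\top}y^k+\sum_{\omega\in\Omega}p^k_\omega\zeta^{k\omega}$. Validity of the aggregated cut \eqref{eqn:cp-agg-cut} for \eqref{opt:DR-TSS-MICP} is the conic version of Proposition~\ref{prop:agg-constr-valid}, proved identically from Proposition~\ref{prop:conic-dual-ineq} and the epigraph/lift-and-project construction of Section~\ref{sec:DR-TSS-MICP}; feasibility, boundedness and existence of an extreme-point optimum for \eqref{opt:cp-disj-cut} is the conic analog of Proposition~\ref{prop:LP-bd}, which is purely polyhedral and goes through with the coefficients \eqref{eqn:RS} in place of the SOCP ones.

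Finally, the termination argument is verbatim that of Theorem~\ref{thm:finite-convg}. If the algorithm did not terminate it would produce an infinite sequence of binary vectors $\{y^k\}$, so by finiteness of $\{0,1\}^n$ there are indices $N<k_1<k_2$ with $y^{k_1}=y^{k_2}$. The upper bound at iteration $k_1$ equals $c^\top y^{k_1}+G(y^{k_1})$ by the tightness just established, while the master problem \eqref{opt:cone-master} at iteration $k_2$ contains the cut \eqref{eqn:cp-agg-cut} generated at iteration $k_1$, so its optimal value $L^{k_2}$ satisfies $L^{k_2}\ge c^\top y^{k_2}+\sum_{\omega}p^{k_1}_\omega\lambda^{k_1\omega\top}y^{k_2}+\sum_{\omega}p^{k_1}_\omega\zeta^{k_1\omega}=c^\top y^{k_1}+G(y^{k_1})=U^{k_1}$. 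Hence the optimality gap at iteration $k_1$ is nonpositive and the algorithm should have stopped there; since exactly one linear inequality is added per iteration, termination is finite.

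The step I expect to need the most care is the conic analog of Lemma~\ref{lem:Q=Ry+Z}, specifically verifying that conic strong duality with dual attainment genuinely holds for every leaf-node relaxation \eqref{opt:micp-node}, and that the leaf nodes really partition $\mZ^\omega$ so that the optimizer $x^{\omega*}$ sits inside some $\mZ^\omega_{v^*}$ — precisely what Assumption~\ref{ass:feasb-micp}(b) and Appendix~\ref{app:strong-dual} are designed to secure. Once that is in hand, everything else is a routine transcription of the argument for Theorem~\ref{thm:finite-convg}.
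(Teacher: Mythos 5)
Your proposal is correct and follows exactly the route the paper intends: the paper gives no separate argument for Theorem~\ref{thm:cone-finite-convg}, stating only that its proof is analogous to that of Theorem~\ref{thm:finite-convg}, and your write-up carries out precisely that analogy — the conic version of Lemma~\ref{lem:Q=Ry+Z} via Proposition~\ref{prop:extreme-pt}, Assumption~\ref{ass:feasb-micp}(b), and the strong duality of Appendix~\ref{app:strong-dual}, followed by the identical repeated-incumbent termination argument. No gaps beyond the ones the paper itself delegates to its assumptions.
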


\begin{algorithm}
	\caption{A decomposition branch-and-cut algorithm for solving \eqref{opt:DR-TSS-MICP}.}
	\label{alg:Cone-decomp-BC}
	\begin{algorithmic}
	\State{Initialization: $L\gets -\infty$, $U\gets \infty$, $k\gets 1$.}
	\While{$U-L>\epsilon$}
		\State{Solve the first-stage problem \eqref{opt:cone-master} to optimality. Let $(\eta^k,y^k)$ be the optimal solution.}
		\State{Update the lower bound as $L\gets\textrm{current optimal value of the master problem}$.}
		\State{Set the current best solution as $y^*\gets y^k$.}
		\For{$\omega\in\Omega$}
			\State{Solve every second-stage problem $\textrm{CSub}(y^k,\omega)$ for $\omega\in\Omega$ to some accuracy}
			\State{$\quad$using the branch-and-cut method.}
			\State{Let $x^{k\omega}$ be the best feasible solution identified in solving $\textrm{CSub}(y^k,\omega)$.}
			\State{Derive the scenario based optimality constraint \eqref{eqn:cp-disj-cut} by solving \eqref{opt:cp-disj-cut}.}
		\EndFor
		\State{Solve the optimization \eqref{opt:cp-worst-case-distr} to obtain the current worst-case probability distribution $\bs{p}^k$.}
		\State{Aggregate the inequalities \eqref{eqn:cp-disj-cut} to obtain the inequality \eqref{eqn:cp-agg-cut}.}
		\State{Add the aggregated optimality constraint \eqref{eqn:cp-agg-cut} to the first-stage problem \eqref{opt:cone-master}.}
		\State{Update the upper bound as $U\gets \textrm{min}\{U,\; c^{\top}y^k+\sum_{\omega\in\Omega}p^k_{\omega}q^{\omega\top}x^{k\omega}\}$}.
		\State{$k\gets k+1$.}
	\EndWhile
	\State{Return $y^*$.}
	\end{algorithmic}
\end{algorithm}

\section{An Illustrative Numerical Example} \label{sec:smallExample}
We now provide a numerical example to illustrate the decomposition branch-and-cut algorithm developed in this paper.
Consider a \eqref{opt:DR-TSS-MISOCP} instance with four scenarios $\Omega=\{\omega_1,\omega_2,\omega_3,\omega_4\}$. 
The optimization problem is as follows:
\begin{equation}
\begin{aligned}
&\underset{y}{\textrm{min}}\;\; 10y_1+12y_2 + \underset{\bs{p}\in\mathcal{P}}{\textrm{max}}\; 
\big[ p_1Q(y,\omega_1)+p_2Q(y,\omega_2) +  p_3Q(y,\omega_3)+p_4Q(y,\omega_4)\big]  \\
&\textrm{ s.t. }\; y_1+y_2\ge 1, \qquad\; y_1,y_2\in\{0,1\}
\end{aligned}
\end{equation}
We let the ambiguity set be defined using the total-variance metric as follows:
\begin{equation}
\mathcal{P}=\Set*{p\in\mathbb{R}^2}{p_1+p_2+p_3+p_4=1,\;d_{TV}(p,p^0)\le 0.1},
\end{equation}
where $p^0=(1/4,1/4,1/4,1/4)$ is the nominal probability distribution, and the total-variance metric $d_{TV}$ 
for the problem instance is given by $d_{TV}(p,p^0)=\sum^4_{i=1}|p_i-1/4|$. 
We may use alternative definitions of $\mP$ such as a set defined using the Wasserstein metric, a moment
based set, or a $\phi$-divergence based set \cite{rahimian2019_dro-review}.
The second-stage problems are given as follows. 
\newline
Scenario $\omega_1$:
\begin{equation}
\begin{aligned}
Q(y,\omega_1)=\;&\underset{x}{\textrm{min}}\;\; 2x_1+x_2 \\
&\textrm{ s.t. } 
 \left\Vert
\begin{bmatrix}
x_1+0.5y_1 \\
x_2+0.5y_2
\end{bmatrix}  
 \right\Vert_2\le 0.5x_1+x_2+1, \\
&\qquad x_1+x_2\ge 0.5y_1+0.5y_2, \;  x_1\in\{0,1\},\; 0\le x_2\le 1.
\end{aligned}
\end{equation}
Scenario $\omega_2$:
\begin{equation}
\begin{aligned}
Q(y,\omega_2)=\;&\underset{x}{\textrm{min}}\;\; 1.5x_1+1.5x_2 \\
&\textrm{ s.t. } 
 \left\Vert
\begin{bmatrix}
x_1+0.5y_1 \\
x_2+0.5y_2
\end{bmatrix}  
 \right\Vert_2\le 0.5x_1+x_2+1, \\
&\qquad x_1+x_2\ge 0.5y_1+0.5y_2, \; x_1\in\{0,1\},\; 0\le x_2\le 1.
\end{aligned}
\end{equation}
Scenario $\omega_3$:
\begin{equation}
\begin{aligned}
Q(y,\omega_3)=\;&\underset{x}{\textrm{min}}\;\; 1.2x_1+1.5x_2 \\
&\textrm{ s.t. } 
 \left\Vert
\begin{bmatrix}
x_1+0.5y_1 \\
x_2+0.5y_2
\end{bmatrix}  
 \right\Vert_2\le 0.5x_1+x_2+1.5, \\
&\qquad x_1+x_2\ge 0.5y_1+0.5y_2, \; x_1\in\{0,1\},\; 0\le x_2\le 1.
\end{aligned}
\end{equation}
Scenario $\omega_4$:
\begin{equation}
\begin{aligned}
Q(y,\omega_4)=\;&\underset{x}{\textrm{min}}\;\; x_1+x_2 \\
&\textrm{ s.t. } 
 \left\Vert
\begin{bmatrix}
x_1+0.5y_1 \\
x_2+0.5y_2
\end{bmatrix}  
 \right\Vert_2\le 0.5x_1+1.5x_2+1, \\
&\qquad x_1+x_2\ge 0.5y_1+0.5y_2,\; x_1\in\{0,1\},\; 0\le x_2\le 1.
\end{aligned}
\end{equation}

Consider an initial first-stage solution $y^{0}=(1,1)$. The root relaxation of $\textrm{Sub}(y^{0},\omega_1)$
gives a feasible solution $x^{\omega_1*}=(0,1)$, and $Q(y^{0},\omega_1)=1$.
The branch-and-cut tree of $\textrm{Sub}(y^{0},\omega_1)$ contains only one 
leaf node (the root node): $\mathcal{L}(y^0,\omega_1)=\{v_{11}\}$.
The (scenario-node) constraint of scenario $\omega_1$ at node $v_{11}$ is given by:
\begin{equation}
    \eta^{\omega_1}\ge 0.7097y_1+0.7097y_2-0.4194. \label{eqn:c1}
\end{equation}
Since there is only one node in $\mathcal{L}(y^0,\omega_1)$,
taking the union of epigraphs is not needed for $\mathcal{L}(y^0,\omega_1)$.
Now consider scenario $\omega_2$. The root relaxation of $\textrm{Sub}(y^{0},\omega_2)$
gives a second-stage solution $x^{\omega_2}=(0.5361,0.4639)$. The branch-and-cut tree of 
$\textrm{Sub}(y^{0},\omega_2)$ contains two leaf nodes $\mathcal{L}(y^0,\omega_2)=\{v_{21},v_{22}\}$,
where the feasible subsets of the two nodes are:
\begin{displaymath} 
\begin{aligned}
& \mathcal{Z}^{\omega_2}_{v_{21}}=\Set*{x\in\mathbb{R}^2}{x_1=0,\;0\le x_2\le 1}; 
\quad \mathcal{Z}^{\omega_2}_{v_{22}}=\Set*{x\in\mathbb{R}^2}{x_1=1,\;0\le x_2\le 1}.
\end{aligned}
\end{displaymath}
By solving the node relaxation SOCP at $v_{21}$ and $v_{22}$, we obtain the following constraints for scenario $\omega_2$ nodes $v_{21}$ and $v_{22}$:
\begin{displaymath}
\begin{aligned}
v_{21}\;\textrm{constraint}:&\ \eta^{\omega_2}\ge 0.75y_1+0.75y_2 \\
v_{22}\;\textrm{constraint}:&\ \eta^{\omega_2}\ge 1.125y_1+0.4687y_2+0.0938.
\end{aligned}
\end{displaymath}
The recourse function value at scenario $\omega_2$ is $Q(y^0,\omega_2)=1.5$.
Solving disjunctive programming formulation \eqref{opt:disj-cut},  
we generate a valid scenario constraint:
\begin{equation}\label{eqn:c2}
\eta^{\omega_2}\ge 0.75y_1+0.75y_2. 
\end{equation}
The parameters in \eqref{opt:disj-cut} are given by $y^0=(1,1)$, $R^{0\omega_21}=(0.75,0.75)^{\top}$, 
$R^{0\omega_22}=(1.125,0.4687)^{\top}$, $S^{0\omega_21}=0$, $S^{0\omega_22}=0.0938$,
$F=(1,1)$ and $a=1$.
Note that this disjunctive constraint is the same as the constraint for node  $v_{21}$ corresponding to scenario $\omega_2$.
For scenario $\omega_3$, the root relaxation SOCP gives solution $x^{\omega_3*}=(1,0)$.
The recourse function value at scenario $\omega_3$ is $Q(y^0,\omega_3)=1.2$. 
The corresponding scenario constraint is given by:
\begin{equation}\label{eqn:c3}
\eta^{\omega_3}\ge 0.65y_1+0.65y_2-0.1. 
\end{equation}
For scenario $\omega_4$, the branch-and-cut tree of $\textrm{Sub}(y^0,\omega_4)$ contains
two leaf nodes: $\mathcal{L}(y^0,\omega^4)=\{v_{41},v_{42}\}$.
By solving the node relaxation SOCP at $v_{41}$ and $v_{42}$, we obtain the following scenario-node constraints:
\begin{displaymath}
\begin{aligned}
\omega_4, v_{41}\;\textrm{constraint}:&\quad \eta^{\omega_4}\ge 0.5y_1+ 0.5y_2 \\
\omega_4, v_{42}\;\textrm{constraint}:&\quad \eta^{\omega_4}\ge 0.4082y_1+0.155 y_2+0.5064.
\end{aligned}
\end{displaymath}
The recourse function value at scenario $\omega_4$ is $Q(y^0,\omega_4)=1$.
Solving disjunctive programming formulation \eqref{opt:disj-cut}, we generate a valid scenario constraint
\begin{equation}\label{eqn:c4}
\eta^{\omega_4}\ge 0.5y_1+0.5y_2. 
\end{equation}
The parameters in \eqref{opt:disj-cut} are given by $y^0=(1,1)$, $R^{0\omega_41}=(0.5,0.5)^{\top}$, 
$R^{0\omega_42}=(0.4082,0.155)^{\top}$, $S^{0\omega_41}=0$, $S^{0\omega_42}=0.5064$,
$F=(1,1)$ and $a=1$.
Note that this disjunctive constraint is the same as the  constraint for $\omega_4$ from node $v_{41}$.
The worst-case probability distribution is given by the following linear program:
\begin{equation}\label{opt:num-worst-prob}
\begin{aligned}
&\underset{p}{\textrm{max}}\;\;\sum^4_{i=1}p_iQ(y^0,\omega_i)  \\
&\textrm{ s.t. } \sum^4_{i=1}|p_i-1/4|\le 0.1, \quad \sum^4_{i=1}p_i=1,\quad p\in\mathbb{R}^4_+.
\end{aligned}
\end{equation}
The worst-case probability distribution is $p^*=(0.25, 0.3, 0.25, 0.2)$.
Aggregating constraints \eqref{eqn:c1}-\eqref{eqn:c4} using this worst-case probability distribution
$p^*$, we obtain the following aggregated constraint:
\begin{equation}\label{eqn:agg1}
\eta \ge 0.665 y_1+ 0.665y_2 -0.12985.
\end{equation}
The algorithm adds constraint \eqref{eqn:agg1} to the first-stage problem. In the next iteration 
the lower and upper bounds are $L=10.535$, $U=23.2$.
The lower bound is attained at $y^1=(1,0)$.  

We repeat the cut generation steps for the current first-stage solution $y^1=(1,0)$, and obtain the following aggregated constraint:
\begin{equation}
\eta\ge 0.60375y_1+0.6375y_2.
\end{equation}
Adding this constraint to the first-stage problem, we obtain an updated lower bound $L=10.6375$,
which is attained at $y^2=(1,0)$. The updated upper bound is $U=10.6375$.
Since $L=U$, the optimal solution is $y^*=(1,0)$.

\section{Numerical Experiments using a Distributionally Robust Facility Location Model}
\label{sec:num-exp}
We now discuss computational performance of the decomposition branch-and-cut algorithm developed in this paper to solve larger instances of DR-TSS-MISOCP. For this purpose we consider a reformulation of the utility robust facility location problem (RFL) investigated in \cite{luo2019-dro-fl}. In the RFL problem, we need to decide location of service centers from a given set of locations.  Customers gain a certain utility from the service center location decisions.
The objective is to maximize the expectation of total utilities gained by customers. 
The RFL model in \cite{luo2019-dro-fl} is studied for the case where the customer demand is deterministic. In the numerical experiments of this paper, 
we investigate a general version of the RFL model allowing for stochastic customer demand.
We assume that the demand has a finite support (finitely many scenarios), and we robustify 
the model by evaluating the expected total utility via the worst-case probability distribution over the scenarios. 
The RFL problem with stochastic demand subject to distributional ambiguity is given as follows:
\begin{equation}\label{opt:RFL}
\begin{aligned}
&\underset{\bs{y}}{\textrm{max}}\;\;\bs{c}^{\top}\bs{y}+\underset{P\in\mathcal{P}^{\Omega}}{\textrm{min}}\E_{P}[\mathcal{Q}(\bs{y},\omega)] \\
&\textrm{ s.t. } \sum_{j\in F} b_j y_j \le B,   \\
&\qquad y_j\in\{0,1\}\;\forall j\in F,
\end{aligned}
\tag{SD-RFL}
\end{equation}
where $\bs{y}$ is the location vector of facilities, $b_j$ is the cost of opening a facility at location $j\in F$,
$B$ is the budget, and the  the recourse function 
$\mathcal{Q}(\bs{y},\omega)$ represents the total utility of customers in scenario $\omega$,
and it is given by the following second-stage problem: 
\begin{equation}\label{opt:RFL-II-2}
\begin{aligned}
& \mathcal{Q}(\bs{y},\omega)\;=\; \textrm{max}\;\; \sum_{i\in S}\sum_{j\in F}\Uoij \\
& \textrm{ s.t. }\; \Uij{1} \le (\hbeta^{ij})^{\top}\bvij{1}-b^{ij}\|(\bs{A}^{ij})^{-1/2}\bvij{1}\|,\\
&\qquad \Uij{2} \le (\hbeta^{ij})^{\top}\bvij{2}-\sqrt{\gamma^{ij}_2}\|(\whb{\Sigma}^{ij})^{1/2}\bvij{2}\|, \\
&\qquad \Uoij = \Uij{1}+\Uij{2}, \quad \bvoij = \bvij{1}+\bvij{2}, \\
&\qquad v^{\omega ij1}_k\le R^{\omega ij}\soij,
   \quad v^{\omega ij2}_k\le R^{\omega ij}(1-\soij)    \\
&\qquad v^{\omega ij}_k\le R^{\omega ij}y_k,\quad v^{\omega ij}_k\le x^{\omega}_{ij},\quad v^{\omega ij}_k\ge x^{\omega}_{ij}-R^{\omega ij}(1-y_k),\\
& \qquad \forall i\in S,\;\forall j\in F,\;\forall k\in F, \\
&\qquad \bs{x}^{\omega}\in X^{\omega}(\bs{y}),\; s^{\omega ij}\in\{0,1\},\;  U^{\omega ij}, \Uij{1}, \Uij{2} \ge 0, \\
&\qquad \bs{v}^{\omega ij}, \bvij{1}, \bvij{2} \in\mathbb{R}^{|F|}_+ \quad \forall i\in S,\;\forall j\in F,
\end{aligned}
\tag{RSP}
\end{equation}
where the feasible set $X^{\omega}(\bs{y})$ is defined as 
\begin{equation}\label{eqn:X-set}
X^{\omega}(\bs{y}):=\Set*{x^{\omega}_{ij}\;\forall i\in S,\;\forall j\in F}
{
\begin{array}{ll}
\sum_{i\in S}x^{\omega}_{ij} \le C_jy_j & \forall j\in F, \\
\sum_{j\in F}x^{\omega}_{ij} \le D_i & \forall i\in S, \\
x^{\omega}_{ij}\ge 0 & \forall i\in S,\; \forall j\in F
\end{array}
}.
\end{equation}
The parameter $R^{\omega ij}$ is defined as $R^{\omega ij}=\textrm{min}\{D^{\omega}_i,C_j\}$, 
where $D^{\omega}_i$ is the demand from customer site $i$ in scenario $\omega$,
and $C_j$ is the capacity of facility at location $j$.
Note that \eqref{opt:RFL} is a DR-TSS-MISOCP problem. Here we have used the results in \cite{luo2019-dro-fl} for the deterministic (single scenario case) to give the more general formulation for the stochastic case. The two second order cone and the binary variables appearing in the model are due to reformulation of a worst-case utility defined over an ambiguity set. A detailed explanation of  the deterministic model formulation is provided in \cite{luo2019-dro-fl}. 

\subsection{Numerical Instance Generation}
We generated 18 instances of \eqref{opt:RFL} to test the computational performance of the decomposition algorithm. The instances are labeled as FL0, FL1, \ldots, FL17.
We now describe the numerical instance generation. 
The number of customer sites $|S|$ is given in the second column of Table~\ref{tab:cuts-500}. 
The customer sites are points  in a $15\times 15$ two-dimensional square. 
The two coordinates of each customer site are generated using a uniform random variable in the range $[0,15]^2$. 
Every customer site is also a candidate service center location, i.e., $F=S$. 
The parameters $\bs{c}$ that represent the extra gain in establishing service centers
in the \eqref{opt:RFL} model are set to zero in all the numerical instances. 
Therefore, the instances only consider the total expected utility gained by the customers, which is computed from the evaluation of the second stage problems. Since the optimal solution is determined primarily based on the solution of  the second stage problems, the models become considerably harder. The cost of establishing each service center is 1, i.e., $b_j=1$ for all $j\in F$ in \eqref{opt:RFL}. 
The total budget is given in the third column of Table~\ref{tab:cuts-500}.
For every $j\in F$, the capacity $C_j$ is generated  from the interval $[100,180]$ using a uniform distribution.
To define the parameters $\hat{\bs{\beta}}^{ij}$, we follow the approach in \cite{luo2019-dro-fl}. We first define an effective distance $L_0=5$,
and define an effective set $F_i$ of service centers for each $i\in S$ such that 
$F_i=\Set*{j\in F}{\|\bs{x}^j-\bs{x}^i\|_2\le L_0}$, where $\bs{x}^i$ is the coordinate vector of the customer site $i\in S$.  The parameters $\hat{\bs{\beta}}^{ij}$ are set as follows:
\begin{equation}
    \hat{\beta}^{ij}_k = \left\{\def\arraystretch{1.2}
			 \begin{array}{ll}  
			  10\times \left(1-\|\bs{x}^i-\bs{x}^j\|_2/L_0\right)  & \qquad \textrm{if } j\in F_i\textrm{ and }k=j  \\
			  1-\|\bs{x}^i-\bs{x}^k\|_2/L_0 & \qquad \textrm{if } j\in F_i\textrm{ and }k\neq j  \\
			  0 & \qquad \textrm{if } j\in F\setminus F_i,\;\forall k\in F.
			  \end{array}
			  \right.
\end{equation}
Thus, the parameters reflect inverse proportionality to utility with respect to distance. 
The covariance matrix $\widehat{\bs{\Sigma}}^{ij}$ (for all $i\in S,\;j\in F$) is set to be $\widehat{\bs{\Sigma}}^{ij}=\bs{Q}^{ij\top}\bs{Q}^{ij}$,
where $\bs{Q}^{ij}$ is a $|F|\times |F|$ matrix with each entry randomly generated from $[0,1]$.
The matrix $\bs{A}^{ij}$ (for all $i\in S,\;j\in F$) is set to be $\bs{A}^{ij}=\bs{I}_{|F|}+0.3\;\bs{Q}^{ij\top}\bs{Q}^{ij}$,
where $\bs{I}_{|F|}$ is the $|F| \times |F|$ identity matrix. We set $\gamma^{ij}=0.2$ and $b^{ij}=0.2$ for all $i\in S$, $j\in F$.
In our test results we use the total variation (TV) distance \cite{villani2008} as the metric to measure 
the distance between two probability distributions on the scenario space. 
The ambiguity set $\mathcal{P}^{\Omega}$ in \eqref{opt:RFL} is defined as 
\begin{equation}\label{def:TV}
\mathcal{P}^{\Omega}=\Set*{\bs{p}}{\sum_{\omega\in\Omega}p_{\omega}=1,
						\;\sum_{\omega\in\Omega}|p_{\omega}-p^0_{\omega}|\le d_{TV}},
\end{equation}
where $\bs{p}^0$ is the nominal probability distribution which is defined as $p^0_{\omega}=1/|\Omega|$
in the experiments. We set the ambiguity distance $d_{TV}=0.1$ in the experiments.
For each scenario, the demand $D^{\omega}_i$ is generated randomly from the interval $[40,60]$
for all $i\in S$. Instance FL1 with 500 scenarios are labeled as 
FL1-500, and the labels are similar for other instances. 

\subsection{Cuts for Second Stage Models}
It is possible to strengthen the second stage models through generation of tangent inequalities. These tangent inequalities are described in \cite{luo2019-dro-fl} for a single scenario (deterministic) case. However, they are also applicable for the second stage model as the developed inequalities are independent of the first stage decisions. Properties of the cuts and numerical methods used to generate these tangent inequalities for the model under consideration
are given in Section~4 of \cite{luo2019-dro-fl}. We added these inequalities for $\omega\in\Omega$, $i\in S$ and $j\in F$ at the $t^{\textrm{th}}$main iteration of
the decomposition algorithm. Specifically, the added cuts take the form:
\begin{equation}\label{eqn:cuts}
U^{\omega ij}\le \bs{\tau}^{\omega ijt\top}\bs{v}^{\omega ij}+\rho^{\omega ijt},
\end{equation}
where $\bs{\tau}^{\omega ijt}$ and $\rho^{\omega it}$ are coefficients of a cut.

\subsection{Experience with an Extended Formulation of \eqref{opt:RFL}}
\label{sec:comp-ext-form}
We first tested whether an off-the-shelf solver can handle an extensive form reformulation of \eqref{opt:RFL}.  In this test, we set $d_{TV}=0$, i.e., no demand ambiguity is assumed. 
The extended reformulation used in this test is given in Appendix~\ref{app:ext-reform}. 
To test the effectiveness of the extended reformulation approach we solved small instances: FL0 with 100, 500 and 1000 scenarios. 
We also solved these instances using the proposed decomposition algorithm.
We give the best objective value, the optimality gap and the solution time for these approaches in Table~\ref{tab:extend-model}.  These numerical results show that the extensive formulation as well as the decomposition approach can solve FL0-100, FL0-500 and FL0-1000 instances to optimality. 
Although the extensive form reformulation of FL0 can be solved to optimality, the total time (loading time plus solution time) required by the 500 and 1000 scenario instances is more than ten times greater than that for the decomposition algorithm. We also note that the difference between the objective values of FL0 with 500 scenarios and 1000 scenarios is about 0.037\%, suggesting that the objective value has nearly converged for problems with 500 scenarios. 

The extensive form formulation of FL1-500 has $2.12\times 10^7$ continuous variables,
$2\times 10^5$ binary variables, $3.26\times 10^7$ rows, $2.14\times 10^7$ columns, and $4\times 10^5$ quadratic constraints. The Gurobi solver takes about 50 minutes to load the model and it runs out of memory (4 GB) when solving the root relaxation problem. 

\begin{table}
\centering
\begin{threeparttable}
{\scriptsize
\caption{Numerical performance of solving a small instance of \eqref{opt:RFL} using the extended reformulation,
and the decomposition branch-and-cut algorithm (DBC).}\label{tab:extend-model}
\begin{tabular}{cc|cccc|ccc}
\hline\hline
& & \multicolumn{4}{c|}{Extended Formulation} &  \multicolumn{3}{c}{DBC} \\
\hline
ID	&	$|\Omega|$	&	Obj	&	Gap(\%)	&	Load-T(s)	&	Sol-T(s)  &	Obj	&	Gap(\%)	&	Sol-T(s)	\\
\hline
FL0	&	100	&	7094.28	&	0.00	&	189	&	194		&	7094.28	&	0.00	&	78	\\
FL0	&	500	&	7121.65	&	0.00	&	952	&	1381	&	7121.65	&	0.00	&	157	\\
FL0	&	1000	&	7119.02	&	0.00	&	2034	&	4495	&	7119.02	&	0.00	&	361	\\
\hline
\end{tabular}
\begin{tablenotes}
	\item The total variation distance is set to be 0.
\end{tablenotes}
}
\end{threeparttable}
\end{table}

\subsection{Experience with the Decomposition Algorithm for Solving \eqref{opt:RFL}}
\label{sec:comp-add-cuts}
We used the decomposition algorithm to solve 18  \eqref{opt:RFL} instances of the facility location problem models as stochastic programs ($d_{TV}=0$), and their distributionally robust counterparts ($d_{TV}=0.1$). The results for $|\Omega|=500$ and $d_{TV}=0$ are given in Table~\ref{tab:cuts-500-dtv-0} and those with $|\Omega|=500$ and $d_{TV}=0.1$ are given in Table~\ref{tab:cuts-500}. 
The time limit is set to 24 hours and 60 cores are used for each instance when solving the second stage programs. In Table~\ref{tab:cuts-500}, the `Init. LB' and `Init. gap' columns give the initial lower bound and optimality gap at the beginning of the algorithm. The initial first-stage solution is determined by solving a RFL model with deterministic demand. The deterministic demand is taken to be expected demand, where the expectation is calculated from the generated data. The initial lower bound is the \eqref{opt:RFL} objective evaluated at the first-stage solution for this deterministic model. 
The `Obj' and `final gap' columns give the best objective value and the final optimality gap when the 24-hour time limit is reached. The optimality gap is defined as $(UB-LB)/UB\times 100\%$, where $LB$ and $UB$ are lower and upper bounds of the optimal value.
The `Iters' column is the number of main iterations in the decomposition algorithm.
The `masT' and `scenT' columns give the percentage of time spent on the master problem and scenario problems, respectively.
The number of cuts generated is given in the column `Cuts', 
which is averaged over the iterations and scenarios.

Tables~\ref{tab:cuts-500-dtv-0} and ~\ref{tab:cuts-500} show that, except for the FL0 instances, the computational time spend in solving  scenario problems is more than 90\%. It is seen that instance FL0-500 is solved to optimality, and
for the other 17 instances optimality gap remains when 24 hour time limit is reached. For instances FL1-500, FL2-500 and FL4-500 the final optimality gap is relatively small. For the stochastic programming instances of these problems, as indicated in Table~\ref{tab:cuts-500-dtv-0}, the gap is $0.4\%$, $1.5\%$ and $0.4\%$, respectively. The gap for these instances is approximately 0.5\%, 1.6\% and 0.5\% for the distributional robust case (see Table~\ref{tab:cuts-500}). This is a significant improvement to the gap at the initial solution. In both tables we observe that the objective value of the best known solution is improved at the first significant digit. For the results reported in Table~\ref{tab:cuts-500-dtv-0} the improvement in the objective value ranges from 1\% to 10\% (average 5.5\%), where as for the results reported in Table~\ref{tab:cuts-500} the improvement ranges from 1\% to 15\% (average 8.6\%). In all cases there is a very significant reduction in the optimality gap known at the initial solution. For the stochastic programming problems the optimality gap reduces from 67\% on the average to approximately 11\%. For the distributional robust counterparts this gap reduces from the inital gap of approximately 70\% to 11.8\%. The fact that initial solutions improve significantly suggests that the solutions obtained by ignoring randomness are sub-optimal, and the extent of sub-optimality increases with ambiguity in the demand distribution. 

When comparing results in Tables ~\ref{tab:cuts-500-dtv-0} and \ref{tab:cuts-500} we find that the statistics on algorithmic performance for solving the stochastic programming problem and its distributionally robust counterpart are similar. Importantly, the number of master iterations taken for the stochastic programming instances in Table~\ref{tab:cuts-500-dtv-0}, and the distributionally robust instances in Table~\ref{tab:cuts-500} are similar. On average 226 iterations were taken for the stochastic programming models in Table~\ref{tab:cuts-500-dtv-0} and 228 iterations were  taken for the distributionally robust models in Table~\ref{tab:cuts-500} within the computational time limit. This suggests that the problem complexity is not increasing in the distributional robustness framework when compared with the stochastic programming model. Note that the step of identifying the worst-case probability distribution over scenarios is not needed
for the case $d_{TV}=0$. We also observe that the model difficulty increases with the budget $B$ on the number of facilities that can be opened. In comparison with the models with $B=5$, the models with a larger value of $B$ have significantly greater optimality gap when terminating with 24-hour time limit. 

We also conducted numerical experiments for the first 10 instances (FL1-FL10) 
with 1000 scenarios under similar computational settings and using $d_{TV}=0.1$. The results are given in Table~\ref{tab:cuts-1000} of Appendix~\ref{app:num-res}. For all of these instances we found that the best solution identified within the time limit is the same as that for the 500-scenario instances.  The relative difference between the best objective values in the 500-scenario 
and 1000-scenario instances at termination is in the range of 0.017\% to 0.118\%. This suggests that a numerical convergence in the distributional robust model is achieved, as the sample average approximation becomes sufficiently accurate, when the number of scenarios is of moderate size.

\begin{table}
\centering
\begin{threeparttable}
{%\footnotesize
\tiny
\caption{\footnotesize Numerical results for solving \eqref{opt:RFL} using the DBC algorithm. 
 Every instance has 500 scenarios and the total variation distance is set to be 0. 
The column `Cuts' gives the average number of cuts generated, 
where the average is taken over scenarios and iterations.
The values in columns `Init. Gap', `Gap', `masT' and `senT' are in percentage. 
}\label{tab:cuts-500-dtv-0}
\center
\begin{tabular}{ccccccccccc}
\hline\hline
ID	&	$|S|$	&	$B$	&	Init. LB 	&	Init. Gap 	&	Obj	&	Gap	&	Iters	&	masT	&	scenT	&	Cuts	\\
\hline
FL0-500	 &	10	&	5	&	7117.2	&	58.8	 &	7282.3	&	0.0	&	126	&	23.8	 &	76.2	 &	63	\\
FL1-500  &	20	&	5	&	5386.6	&	63.1	 &	5438.7	&	0.4	&	921	&	7.8	&	92.2	 &	141	\\
FL2-500	 &	40	&	5	&	5257.6	&	59.3	 &	5457.8	&	1.5	&	323	&	7.9	&	92.1	 &	264	\\
FL3-500	 &	40	&	10	&	6530.6	&	66.2	 &	7214.5	&	20.9 	&	155	&	7.8	&	92.2	 &	262	\\
FL4-500  &	60	&	5	&	5291.0	&	64.8	 &	5599.1	&	0.4	&	143	&	7.5	&	92.5	 &	390	\\
FL5-500	 &	60	&	10	&	8111.5	&	71.6	 &	8482.0	&	7.9	&	294	&	6.7	&	93.3	 &	473	\\
FL6-500 &	80	&	5	&	4691.8	&	70.6	 &	5018.3	&	6.7	&	254	&	6.2	&	93.8	 &	530	\\
FL7-500	 &	80	&	10	&	7940.8	&	69.7	 &	8769.8	&	16.7	 &	210	&	7.2	&	92.8	 &	503	\\
FL8-500 	&	100	 &	5	&	5402.9	&	68.4 	&	5552.4	&	8.2	&	204	&	7.2	 &	92.8	 &	643	\\
FL9-500	 &	100	&	10	&	9175.4	&	64.5	 &	9942.8	&	17.6	 &	169	&	6.8	&	93.2	 &	800	\\
FL10-500	&	200	&	5	&	5195.0	&	58.6	 &	5626.7	&	8.5	&	182	&	7.0	&	93.0 	&	691	\\
FL11-500	&	200	&	10	&	8364.2	&	67.5	 &	9050.4	&	21.7	 &	137	&	7.2	&	92.8	 &	1142	\\
FL12-500	&	300	&	5	&	5285.9	&	66.3	 &	5655.8	&	8.5	 &	187	&	6.5	&	93.5	 &	889	\\
FL13-500	&	300	&	10	&	8623.5	&	62.2	 &	9036.8	&	18.4	 &	147	&	5.8	&	94.2	 &	1012	\\
FL14-500	&	400	&	5	&	5074.5	&	74.7	 &	5526.3	&	8.0	 &	145	&	6.7	&	93.3	 &	758	\\
FL15-500	&	400	&	10	&	8466.4	&	74.7	 &	8952.5	&	23.3	 &	158	&	7.6	&	92.4	 &	863	\\
FL16-500	&	500	&	5	&	5499.0	&	68.7	 &	5983.1	&	9.3	 &	160	&	7.2	&	92.8	 &	1007	\\
FL17-500	&	500	&	10	&	8551.7	&	76.2	 &	9080.2	&	21.6	 &	147	&	7.3	&	92.7	 &	1245	\\
\hline
Average &  &  &  & 67.0  &  & 11.1  & 226  & 8.0 & 92.0 & 649 \\
\hline
\end{tabular}
\begin{tablenotes}
	\item Instance FL0-500 is solved to optimality in 158 seconds using 60 cores. 
	\item Instances FL1-500 to FL17-500 are solved using 60 cores with 24-hour time limit.
\end{tablenotes}
}
\end{threeparttable}
\end{table}

\begin{table}
\centering
\begin{threeparttable}
{\tiny
\caption{\footnotesize Numerical results for solving \eqref{opt:RFL} using the DBC algorithm.
Every instance has 500 scenarios and the total variation distance is set to be 0.1. 
 }\label{tab:cuts-500}
\center
\begin{tabular}{ccccccccccc}
\hline\hline
ID	&	$|S|$	&	$B$	&	Init. LB 	&	Init. Gap 	&	Obj	&	Gap 	&	Iters	&	masT	&	scenT	&	Cuts	\\
\hline
FL0-500  &10  &5  &6988.1  &55.9  &7091.28  &0  &138  &23.2  &76.8  &68  \\ 
FL1-500  &20  &5  &5324.6  &64.1  &5400.3  &0.5  &998  &7.3  &92.7  &133  \\ 
FL2-500  &40  &5  &5002.3  &67.9  &5378.7  &1.6  &289  &7.7  &92.3  &261  \\ 
FL3-500  &40  &10  &6315.5  &69.8  &7153.7  &20.8  &140  &7.9  &92.1  &259  \\  
FL4-500  &60  &5  &5134.7  &68.9  &5496.4  &0.5  &141  &7.5  &92.5  &363  \\ 
FL5-500  &60  &10  &7255.1  &71.5  &8326.5  &9.5  &269  &6.8  &93.2  &453  \\ 
FL6-500  &80  &5  &4129  &78.4  &4926.6  &7.2  &253  &6.5  &93.5  &483  \\ 
FL7-500  &80  &10  &7650.2  &70.7  &8555  &18.5  &214  &7.3  &92.7  &511  \\ 
FL8-500  &100  &5  &4765.3  &69.4  &5519.6  &8.4  &228  &7.7  &92.3  &687  \\ 
FL9-500  &100  &10  &8931.8  &69.4  &9682.6  &18.1  &197  &7  &93  &760  \\ 
FL10-500  &200  &5  &4845.2  &63.2  &5472.3  &8.2  &177  &7.1  &92.9  &632  \\ 
FL11-500  &200  &10  &8439.1  &71.1  &8992  &23.6  &146  &7  &93  &1217  \\ 
FL12-500  &300  &5  &5169.2  &65.2  &5575.1  &8.5  &179  &6.5  &93.5  &856  \\ 
FL13-500  &300  &10  &8435.7  &70.3  &8974.8  &19.7  &166  &6.2  &93.8  &983  \\  
FL14-500  &400  &5  &4855.7  &73.2  &5489.3  &9.4  &142  &6.8  &93.2  &792  \\ 
FL15-500  &400  &10  &8339.5  &76.4  &8832.3  &23.6  &139  &7.3  &92.7  &941  \\ 
FL16-500  &500  &5  &5344.2  &76.3  &5814.7  &9.8  &143  &6.9  &93.1  &922  \\ 
FL17-500  &500  &10  &8450.7  &80.2  &8939.2  &24.2  &137  &7.6  &92.4  &1203  \\ 
\hline
Average &  &  &  & 70.1  &   & 11.8  & 228  & 8.0  & 92.0  & 640   \\
\hline
\end{tabular}
\begin{tablenotes}
	\item Instance FL0-500 is solved to optimality in 174 seconds using 60 cores. 
	\item Instances FL1-500 to FL17-500 are solved using 60 cores with 24-hour time limit.
\end{tablenotes}
}
\end{threeparttable}
\end{table}

\section{Concluding Remarks} \label{sec:conclusions}
The decomposition branch-and-cut algorithm developed in this paper is a general purpose algorithm.  It was used to solve a DR-TSS-MISOCP reformulation of a stochastic programming service center location problems as well as its distributionally-robust counterpart. Results show that the decomposition algorithm achieves significant improvement in the solution time when compared to an extensive form formulation of the stochastic programming model. Achieving a reasonable optimality gap was only possible when considering the decomposition algorithm. For the test instances the numerical results suggest that the distributionally robust counterpart does not increase the model complexity. Despite significant improvements through the algorithmic development of this paper and the use of cuts to strengthen the second stage formulation, the test-case model remains hard and optimality gap remains when terminating with a 24-hour time limit. Additional strengthening of the second-stage problems by adding valid inequality constraints may further improve the computational performance of the algorithm on the test instances studied here. Identification of such formulation strengthening constraints and evaluation of their practical value is problem dependent, and it should be considered when solving instances of models from specific applications admitting mixed-integer conic second stage formulations. 

\begin{acknowledgements}
This research was supported by the Office of Naval Research grant N00014-18-1-2097-P00001.
\end{acknowledgements}

\bibliographystyle{spmpsci_unsrt}
\bibliography{reference-database-08-06-2019}

\appendix
\section{Strong Duality for Conic Linear Programming}
\label{app:strong-dual}
We consider the following conic linear program:
\begin{equation}\label{opt:CLP2}
\begin{aligned}
&\underset{x}{\textrm{min}}\;\;\langle c,x\rangle \\
&\textrm{ s.t. }A^1x=b^1,  \\
&\qquad A^2x\ge b^2, \\
&\qquad x\in \mK.
\end{aligned}
\tag{P}
\end{equation}
Note that \eqref{opt:CLP2} is a general formulation of the node relaxation second-stage problem considered in this paper.
The dual of \eqref{opt:CLP2} is:
\begin{equation}\label{opt:CLP2-dual}
\begin{aligned}
&\underset{\mu,\lambda,y}{\textrm{max}}\;\;\mu^Tb^1+\lambda^Tb^2  \\
&\textrm{ s.t. } c-A^{1\top}\mu-A^{2\top}\lambda-y\in\mK^*, \\
&\qquad y\in\mK^*,\;\;\lambda\ge \bs{0}.
\end{aligned}
\tag{D}
\end{equation}
The following theorem (Theorem~\ref{thm:strong-dual}) on strong duality is based on Theorem~2.5.4 of \cite{strong-conic-dual}.
\begin{theorem}[Strong Duality]\label{thm:strong-dual}
If the conic linear program \eqref{opt:CLP2} is feasible and has finite optimal value $\gamma$, 
and there exists an interior point $\tilde{x}\in int(\mK)$ satisfying $A^1\tilde{x}=b^1$,
$A^2\tilde{x}>b^2$, then the dual problem of \eqref{opt:CLP2} is feasible and has finite optimal value $\beta$
which is equal to $\gamma$. 
\end{theorem}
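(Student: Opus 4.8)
The plan is to derive this from the equality-constrained conic strong duality theorem, Theorem~2.5.4 of \cite{strong-conic-dual}, by absorbing the inequality block $A^2x\ge b^2$ into the cone. First I would introduce a slack vector $s\in\mathbb{R}^m$ and rewrite \eqref{opt:CLP2} as the equality-constrained conic program
\[
\min\;\braket{(c,\bs{0})}{(x,s)}\quad\text{s.t.}\quad A^1x=b^1,\;\; A^2x-s=b^2,\;\;(x,s)\in\widehat{\mK},
\]
where $\widehat{\mK}:=\mK\times\mathbb{R}^m_+$ is a closed convex cone with dual cone $\widehat{\mK}^*=\mK^*\times\mathbb{R}^m_+$. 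The map $x\mapsto(x,\,A^2x-b^2)$ is a bijection between the feasible set of \eqref{opt:CLP2} and that of the reformulation, and it preserves the objective value; hence the reformulation is feasible with the same finite optimal value $\gamma$.

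Next I would verify that the Slater (interior-point) hypothesis carries over. Since $\operatorname{int}(\widehat{\mK})=\operatorname{int}(\mK)\times\operatorname{int}(\mathbb{R}^m_+)$, the point $(\tilde{x},\tilde{s})$ with $\tilde{s}:=A^2\tilde{x}-b^2$ lies in $\operatorname{int}(\widehat{\mK})$: by assumption $\tilde{x}\in\operatorname{int}(\mK)$, and $A^2\tilde{x}>b^2$ gives $\tilde{s}>\bs{0}$. It also satisfies both equality constraints by construction. Therefore Theorem~2.5.4 of \cite{strong-conic-dual} applies to the reformulation and yields a dual problem that is feasible, attains its optimum, and has optimal value equal to $\gamma$.

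It then remains to identify the dual of the reformulation with \eqref{opt:CLP2-dual}. Assigning multipliers $\mu$ to $A^1x=b^1$ and $\lambda$ to $A^2x-s=b^2$, the conic constraint of the standard-form dual reads $(c,\bs{0})-(A^{1\top}\mu+A^{2\top}\lambda,\,-\lambda)\in\mK^*\times\mathbb{R}^m_+$, that is, $c-A^{1\top}\mu-A^{2\top}\lambda\in\mK^*$ and $\lambda\ge\bs{0}$, with objective $\mu^\top b^1+\lambda^\top b^2$. This is equivalent to \eqref{opt:CLP2-dual}: given any such $(\mu,\lambda)$ we may take $y=\bs{0}\in\mK^*$; conversely, for any $(\mu,\lambda,y)$ feasible to \eqref{opt:CLP2-dual} we have $c-A^{1\top}\mu-A^{2\top}\lambda=(c-A^{1\top}\mu-A^{2\top}\lambda-y)+y\in\mK^*+\mK^*\subseteq\mK^*$, and the objective does not depend on $y$. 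Hence the two duals share the same feasible pairs $(\mu,\lambda)$ and the same optimal value, giving $\beta=\gamma$.

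I expect the only mildly delicate point to be the bookkeeping in this last step: transposing the block operator $(x,s)\mapsto(A^1x,\,A^2x-s)$ correctly, and recognising that the auxiliary variable $y$ in \eqref{opt:CLP2-dual} is redundant because $\mK^*$ is closed under addition ($\mK^*+\mK^*=\mK^*$). Everything else — the bijection of feasible sets, and the product structure of both $\operatorname{int}(\widehat{\mK})$ and $\widehat{\mK}^*$ — is routine, and no separating-hyperplane argument needs to be reproved, since it is already packaged inside Theorem~2.5.4 of \cite{strong-conic-dual}.
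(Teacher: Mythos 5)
Your proposal is correct and takes essentially the same route as the paper, which proves nothing beyond invoking Theorem~2.5.4 of \cite{strong-conic-dual}: you apply that same equality-form result after a slack-variable reformulation. The details you supply — the bijection of feasible sets, the Slater point carrying over to $\operatorname{int}(\mK)\times\operatorname{int}(\mathbb{R}^m_+)$, and the redundancy of $y$ in \eqref{opt:CLP2-dual} because $\mK^*+\mK^*\subseteq\mK^*$ — are exactly the bookkeeping the paper leaves implicit, and they are all sound.
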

Theorem~\ref{thm:strong-dual} implies that the strong duality for the conic linear program \eqref{opt:CLP2} holds
 \eqref{opt:CLP2} has a non-empty relative interior.

\section{The DR-TSS-MISOCP Reformulation of Utility Robust Facility Location Problem}
\label{app:RFL}

\section{Extended Formulation}
\label{app:ext-reform}
The extended formulation of \eqref{opt:RFL} with $d_{TV}=0$ is given as follows:
\begin{equation}
\begin{aligned}
&\textrm{max}\;\bs{c}^{\top}\bs{y}+\sum_{\omega\in\Omega}\sum_{i\in S}\sum_{j\in F}p^0_{\omega}U^{\omega ij} \\
&\textrm{ s.t. } \sum_{j\in F}b_jy_j\le B,  \\
&\qquad \textrm{all constraints from }\eqref{opt:RFL-II-2}\textrm{ for all }\omega\in\Omega, \\ 
\end{aligned}
\tag{RFL-E}
\end{equation}
where $\bs{p}^0:=\{p^0_{\omega}:\;\omega\in\Omega\}$ is the nominal probability distribution over all scenarios.

\section{Additional Numerical Results}
\label{app:num-res}
\begin{table}[H]
\centering
\begin{threeparttable}
{
\tiny
\caption{\footnotesize Numerical results for solving instances with 1000 scenarios and the total variation distance set to be 0.1. 
The column `Diff(\%)' gives the relative absolute difference in the best objective value
from the 500-scenario problems.}\label{tab:cuts-1000}
\center
\begin{tabular}{ccccccccccc}
\hline\hline
ID	&	$|S|$,$B$	&	Init. LB 	&	Init. Gap 	&	Obj	&	Gap 	&	Iters	&	masT	&	scenT	&	Cuts & Diff 		\\
\hline
FL0-1000    &    10,  5    &    6969.7  &  53.1     &    7090.07   &  0.0  & 145    &   23.0   &  77.0  &   70  & 0.017  \\
FL1-1000	&	20,	5	&	5228.8	&	64.5	&	5398.8	&	0.6	&	687	&	5.8	&	94.2	&	137	&	0.028	\\
FL2-1000	&	40,	5	&	5020.3	&	71.5	&	5382.7	&	2.0	&	223	&	5.4	&	94.6	&	247	&	0.074	\\
FL3-1000	&	40,	10	&	6318.2	&	66.7	&	7148.9	&	27.8	&	106	&	7.2	&	92.8	&	274	&	0.067	\\
FL4-1000	&	60,	5	&	5340.9	&	66.7	&	5502.9	&	0.6	&	98	&	6.9	&	93.1	&	388	&	0.118	\\
FL5-1000	&	60,	10	&	7260.1	&	71.0	&	8319.3	&	14.0	&	182	&	3.9	&	96.1	&	402	&	0.086	\\
FL6-1000	&	80,	5	&	4190.4	&	84.9	&	4928.6	&	10.7	&	90	&	3.5	&	96.5	&	512	&	0.041	\\
FL7-1000	&	80,	10	&	7667.4	&	68.0	&	8558.7	&	21.4	&	80	&	5.8	&	94.2	&	523	&	0.043	\\
\hline
\end{tabular}
\begin{tablenotes}
	\item Instance FL0-1000 is solved to optimality in 392 seconds using 60 cores. 
	\item Instances FL1-1000 to FL7-1000 are solved using 60 cores with 24-hour time limit.
\end{tablenotes}
}
\end{threeparttable}
\end{table}
\end{document}